\newtcolorbox{tbox}[1][]{%
    breakable,
    enhanced,
    colframe=blue,
    coltitle=white,
    #1
}
\newtheorem{introthm}{Theorem}
\newtheorem{theorem}{Theorem}[section]
\newtheorem{lemma}[theorem]{Lemma}
\newtheorem{proposition}[theorem]{Proposition}
\newtheorem{corollary}[theorem]{Corollary}
\newtheorem{conjecture}[theorem]{Conjecture}
\theoremstyle{definition}
\newtheorem{definition}[theorem]{Definition}
\newtheorem{example}[theorem]{Example}
\newtheorem{remark}[theorem]{Remark}
\theoremstyle{remark}
\title{Equivalent conjectures
on blowing-ups of $\mathbb P^2$}
\author[A.~Laface]{Antonio Laface}
\address{
Departamento de Matem\'atica,
Universidad de Concepci\'on,
Casilla 160-C,
Concepci\'on, Chile}
\email{alaface@udec.cl}
\author[L.~Ugaglia]{Luca Ugaglia}
\address{
Dipartimento di Matematica e Informatica,
Universit\`a degli studi di Palermo,
Via Archirafi 34,
90123 Palermo, Italy}
\email{luca.ugaglia@unipa.it}
\author[M.~Vilches]{Macarena Vilches}
\address{
Departamento de Matem\'atica,
Universidad de Concepci\'on,
Casilla 160-C,
Concepci\'on, Chile}
\email{mvilches2016@udec.cl}
\subjclass[2020]{Primary 14C20}
\keywords{Special divisors, blowing-ups of the projective space}
\thanks{The authors have been 
partially supported by Proyecto
FONDECYT Regular 
n.~1230287 and by 
``Piano straordinario per il miglioramento della qualit\`a della ricerca e dei risultati della VQR 2020-2024 - Misura A''
of the University of Palermo.
The second author is member of 
INdAM - GNSAGA}
\numberwithin{equation}{section}
\begin{document}

\begin{abstract}
We provide a characterization of asymptotical 
speciality of a nef and big divisor $D$
on an algebraic surface in terms of the
arithmetic genus of curves in $D^{\perp}$.
As a consequence we prove that the SHGH 
conjecture for linear systems on
the blowing-up of the projective plane
at points in very general position
is equivalent to the fact that each nef
class is non-special.
Finally we prove that if $r < 2^n$ then any nef divisor of the blowing-up of the $n$-dimensional
projective space 
at $r$ points in very general position is asymptotically non-special.
\end{abstract}

\maketitle

\section*{Introduction}
Let $\pi\colon X_r^n\to\mathbb P^n$
be the blowing-up of the projective
space at $r$ points in very general
position with exceptional divisors
$E_1,\dots,E_r$ and let $H$ be the
pullback of a hyperplane.
The {\em virtual dimension} of a 
divisor $D = dH-\sum_{i=1}^rm_iE_i$ is 
\[
 v(D) 
 := 
 \binom{d+n}{n} - \sum_{i=1}^r\binom{m_i+n-1}{n}-1
\]
and its {\em expected dimension} is 
$e(D) := \max\{v(D),-1\}$.
The divisor $D$ is {\em non-special} 
if $\dim |D| = e(D)$ or equivalently 
if $h^0(X^n_r,\mathcal O(D))\cdot
h^1(X^n_r,\mathcal O(D))=0$.
When $n=2$ there are three well known 
conjectures about divisors:
\begin{enumerate}
\item
an effective divisor 
$D$ which has intersection product at 
least $-1$ with any $(-1)$-curve of 
$X_r^2$ is {\em non-special};
\item
the only negative curves of $X_r^2$ are 
$(-1)$-curves;
\item
the divisor class $\sqrt{r}H-\sum_{i=1}^rE_i$ 
is nef.
\end{enumerate}
The first one is known as
the Segre–Harbourne–Gimigliano–Hirschowitz
(SHGH) Conjecture, while the latter is the 
famous Nagata Conjecture.
It is known that $(1)\Rightarrow 
(2)\Rightarrow (3)$ (see~\cite{chmr})
and that the first conjecture admits other  
equivalent formulations (see~\cite{cm}).
Our first result is the following.
\begin{introthm}
\label{asymp}
Let $r$ be a positive integer. Then the 
following are equivalent.
\begin{enumerate}
\item
The only negative curves of $X_r^2$ are 
$(-1)$-curves.
\item
$X_r^2$ does not contain nef and big 
divisors which are asymptotically 
special.
\end{enumerate}
\end{introthm}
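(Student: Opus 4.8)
The plan is to derive the equivalence from the following characterization of asymptotic speciality on surfaces, whose proof I also sketch: \emph{if $D$ is a nef and big divisor on a smooth projective surface $X$, then $D$ is asymptotically non-special if and only if $p_a(Z)\le 0$ for every effective divisor $Z$ with $D\cdot Z=0$.} Indeed, by the Hodge index theorem the classes of the irreducible curves $C$ with $D\cdot C=0$ span a negative definite sublattice of $\mathrm{NS}(X)$, so their union --- the null locus $N$ of $D$ --- is contractible; let $f\colon X\to Y$ be the contraction to a normal projective surface. Since $D$ is numerically trivial on the fibres of $f$, one has $D=f^{*}D'$ with $D'$ a $\mathbb Q$-Cartier divisor on $Y$ that is nef, big and strictly positive on every curve of $Y$, hence ample. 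As $f_{*}\mathcal O_X=\mathcal O_Y$, $R^{i}f_{*}\mathcal O_X=0$ for $i\ge 2$, and $R^{1}f_{*}\mathcal O_X$ is a skyscraper sheaf with $H^{0}(Y,R^{1}f_{*}\mathcal O_X)$ of dimension $\sum_{y}p_g(Y,y)$, the Leray spectral sequence for $f$ together with Serre vanishing on $Y$ give $h^{1}(X,\mathcal O(mD))=\sum_{y}p_g(Y,y)$ for $m\gg 0$. On $X_r^2$ one has $\chi(\mathcal O(mD))=v(mD)+1$ and $h^{2}(\mathcal O(mD))=0$ for $m\gg 0$, so $D$ is asymptotically non-special exactly when that number vanishes, i.e.\ when $Y$ has only rational singularities; by Artin's rationality criterion this holds if and only if $p_a(Z)\le 0$ for every effective cycle $Z$ supported on $N$, equivalently for every effective $Z$ with $D\cdot Z=0$.

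Granting this, $(1)\Rightarrow(2)$ is immediate: if $D$ is nef and big, every irreducible curve $C$ with $D\cdot C=0$ lies in the negative definite lattice $D^{\perp}$, hence $C^{2}<0$, hence (by $(1)$) $C$ is a $(-1)$-curve; and for $C_1\ne C_2$ of this kind $(C_1+C_2)^{2}<0$ forces $C_1\cdot C_2=0$. So every effective $Z$ with $D\cdot Z=0$ is a sum $\sum_i a_iC_i$ of pairwise disjoint $(-1)$-curves, whence $p_a(Z)=1-\sum_i\binom{a_i+1}{2}\le 0$, and $D$ is asymptotically non-special.

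For $(2)\Rightarrow(1)$, let $C$ be a negative curve, fix an ample class $A$, and set $D=A+\tfrac{A\cdot C}{-C^{2}}C$. Then $D$ is nef ($D\cdot\Gamma\ge A\cdot\Gamma>0$ for $\Gamma\ne C$, and $D\cdot C=0$), big ($D^{2}=A^{2}+\tfrac{(A\cdot C)^{2}}{-C^{2}}>0$), and has null locus $\{C\}$, so the only effective divisors $Z$ with $D\cdot Z=0$ are the multiples $aC$. By $(2)$ and the characterization $p_a(aC)\le 0$ for all $a\ge 1$; in particular $p_a(C)=0$, so $C$ is smooth rational, and we are done if $C^{2}=-1$. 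Suppose $C^{2}\le -2$ and write $C=dH-\sum m_iE_i$ with $d\ge 1$. If every $m_i\le 1$ then on the one hand $h^{0}(\mathcal O(C))\ge 1$ forces $\#\{i:m_i=1\}\le\binom{d+2}{2}-1$ (very general points), and on the other $\#\{i:m_i=1\}=d^{2}-C^{2}\ge d^{2}+2$; this is impossible, since $\binom{d+2}{2}-1<d^{2}+2$ for every $d\ge 1$. Hence $m_{i_0}=:t\ge 2$ for some $i_0$; the effective divisor $Z=C+tE_{i_0}$ has $Z\cdot E_{i_0}=0$ and $p_a(Z)=\binom{t}{2}\ge 1$, and when $t^{2}<-C^{2}$ --- so that $\langle C,E_{i_0}\rangle$ is negative definite --- subtracting from $A$ its orthogonal projection onto $\langle C,E_{i_0}\rangle$ yields a divisor of the form $A+(\text{effective combination of }C,E_{i_0})$, hence nef and big, with $D\cdot C=D\cdot E_{i_0}=0$; then $Z\in D^{\perp}$ has $p_a(Z)\ge 1$, so $D$ is asymptotically special, contradicting $(2)$.

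The weight of the argument lies in two places. First, the characterization itself: one must control the contraction $f\colon X\to Y$ (which need not be $\mathbb Q$-factorial and may have non-rational singularities), identify $\lim_{m}h^{1}(X,\mathcal O(mD))$ with $\sum_y p_g(Y,y)$ via Leray, and then invoke Artin's criterion. Second, the residual case of $(2)\Rightarrow(1)$: a smooth rational curve with $C^{2}\le -2$ meeting every exceptional curve either at most once or with a multiplicity $t$ for which $t^{2}\ge -C^{2}$. For $r\le 9$ nothing is needed, since $X_r^2$ is then a del Pezzo surface or a general rational elliptic surface and carries no negative curve other than a $(-1)$-curve; for $r\ge 10$ one expects to exploit the full list of proximity inequalities satisfied by such a $C$ --- coming from the lines, conics and higher $(-1)$-curves through the $r$ points --- to build a negative definite configuration containing $C$ whose fundamental cycle has arithmetic genus $\ge 1$, and that is where the real difficulty should be.
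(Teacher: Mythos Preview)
Your characterization of asymptotic speciality via Artin's rationality criterion is essentially the paper's Proposition~1.1 (proved there by restricting to a thick effective divisor $E$ in $D^{\perp}$ rather than by contracting, but to the same effect), and your argument for $(1)\Rightarrow(2)$ matches the paper's.

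The genuine gap is the one you yourself flag in $(2)\Rightarrow(1)$. From $p_a(aC)\le 0$ you correctly extract $p_a(C)=0$, but you do not rule out smooth rational curves with $C^2\le -2$; your combinatorial attempt only covers the cases where either all $m_i\le 1$ or some $m_{i_0}=t\ge 2$ with $t^2<-C^2$, and you leave the remaining situation open. This residual case is not a technicality: nothing in your set-up so far uses the hypothesis that the points are in \emph{very general} position beyond the crude dimension count, and that hypothesis is exactly what is needed.

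The paper closes this gap by a different mechanism. It proves directly (Lemma~2.4) that on $X_r^2$ every irreducible curve with $C^2<0$ and $p_a(C)\le 1$ is a $(-1)$-curve, by \emph{degenerating} the $r$ very general points onto a smooth plane cubic, so that the special fibre $Y$ is an anticanonical rational surface with a smooth member of $|-K_Y|$. The limit $\Gamma$ of $C$ then satisfies $-K_Y\cdot\Gamma\ge 0$ (each component meets the smooth anticanonical curve non-negatively), whence $-K_r\cdot C\ge 0$ and the genus formula forces $C^2\in\{-1,-2\}$. The $(-2)$ case is then excluded by a separate argument: using the Weyl group $W(X_r^2)$ one reduces a putative $(-2)$-class to pseudostandard form, and a short computation (Lemma~2.2, Corollary~2.3) shows no such standard class exists. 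No ad hoc negative-definite configuration containing $C$ is needed. So the missing idea is the degeneration to an anticanonical surface together with the Weyl-group/standard-form reduction, not a refinement of proximity inequalities.
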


In order to prove the theorem we
show that on an algebraic surface 
curves which are orthogonal
to not asymptotically special nef and big 
divisor have arithmetic genus at most one.
This result can be used to bound the Mori
cone of the surface in some cases.
As a consequence of this result we can prove
the following equivalence.

\begin{introthm}
\label{nef}
Let $r$ be a positive integer. Then the 
following are equivalent.
\begin{enumerate}
\item
The SHGH conjecture holds on $X^2_r$.
\item
Each nef class of $X_r^2$ is non-special.
\end{enumerate}
\end{introthm}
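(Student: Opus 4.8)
The plan is to prove the two implications separately: the forward one is immediate, while the backward one combines Theorem~\ref{asymp} with a reduction of the SHGH inequality to nef classes.

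\smallskip
\noindent\emph{Proof that $(1)\Rightarrow(2)$.} Let $D$ be a nef class on $X^2_r$. Intersecting $D$ with the pull-back of a general line and with each $E_i$ shows $d=D\cdot H\ge 0$ and $m_i=D\cdot E_i\ge 0$, so the dimension formulas apply to $D$ in the usual way. If $D$ is not effective then $h^0(X^2_r,\mathcal O(D))=0$, hence $D$ is non-special. If $D$ is effective then, being nef, $D\cdot C\ge 0\ge -1$ for every $(-1)$-curve $C$, so the SHGH conjecture applies to $D$ and yields that $D$ is non-special.

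\smallskip
\noindent\emph{Proof that $(2)\Rightarrow(1)$.} Assume every nef class of $X^2_r$ is non-special. If $D$ is nef and big then $mD$ is nef for every $m\ge 1$, hence non-special by assumption, so $D$ is asymptotically non-special; thus $X^2_r$ carries no nef and big divisor which is asymptotically special, and Theorem~\ref{asymp} shows that the only negative curves of $X^2_r$ are $(-1)$-curves. Now let $D$ be an effective divisor with $D\cdot C\ge -1$ for every $(-1)$-curve $C$; we must prove $\dim|D|=e(D)$. If $D$ is not nef, there is an irreducible curve $C$ in the support of $D$ with $D\cdot C<0$; writing $D=aC+D'$ with $a\ge 1$ and $D'$ effective not containing $C$ gives $aC^2\le D\cdot C<0$, so $C^2<0$, hence $C$ is a $(-1)$-curve and $D\cdot C=-1$. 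Since $C$ is then a fixed component of $|D|$, the exact sequence
\[
0\longrightarrow\mathcal O(D-C)\longrightarrow\mathcal O(D)\longrightarrow\mathcal O_C(D)\longrightarrow 0,
\]
together with $H^0(\mathcal O_C(D))=H^1(\mathcal O_C(D))=0$ (as $\mathcal O_C(D)$ has degree $-1$ on $C\cong\mathbb P^1$), gives $H^j(\mathcal O(D))\cong H^j(\mathcal O(D-C))$ for all $j$; in particular $h^0(D-C)=h^0(D)\ge 1$ and $\chi(\mathcal O(D-C))=\chi(\mathcal O(D))$, whence $v(D-C)=v(D)$. One then checks that $D-C$ still satisfies $(D-C)\cdot C'\ge -1$ for every $(-1)$-curve $C'$. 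Iterating, and using that intersection with a fixed integral ample class strictly decreases but stays non-negative at each step, after finitely many steps we reach a nef and effective divisor $P$ with $h^j(\mathcal O(D))\cong h^j(\mathcal O(P))$ for all $j$, $h^0(P)=h^0(D)$ and $v(P)=v(D)$. By assumption $P$ is non-special, so $\dim|P|=e(P)=v(P)$, the last equality because $\dim|P|=h^0(P)-1\ge 0$. Therefore
\[
\dim|D|=h^0(D)-1=h^0(P)-1=\dim|P|=v(P)=v(D)=e(D),
\]
so $D$ is non-special, which is precisely the statement of the SHGH conjecture on $X^2_r$.

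\smallskip
The bookkeeping along the reduction is routine; the two points that require a little care are the verification that the inequality against all $(-1)$-curves survives the removal of a fixed $(-1)$-curve (one shows that a violating curve $C'$ would, together with $C$, sit in the fixed part of $|D|$ in a way incompatible with $D\cdot C=-1$, or else force $D-m(C+C')$ to be effective for every $m$, which is impossible), and the termination of the procedure. The conceptual heart of the argument is the first step of $(2)\Rightarrow(1)$: it is there that Theorem~\ref{asymp} is used to convert the hypothesis on nef classes into the statement that the Mori cone of $X^2_r$ is generated by $(-1)$-curves, and this is exactly what makes the reduction to nef divisors possible.
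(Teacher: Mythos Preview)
Your proof is correct and follows essentially the same strategy as the paper's: both use Theorem~\ref{asymp} to reduce to the case where the only negative curves are $(-1)$-curves, and then strip off the $(-1)$-curves meeting $D$ with intersection $-1$ to reach a nef divisor with the same $h^0$ and the same virtual dimension. The only difference is presentational: the paper removes all such curves at once, setting $F$ equal to their (reduced) sum and $M=D-F$, and checks directly that $v(D)=v(M)$ and $\dim|D|=\dim|M|$; you remove them one at a time via the restriction sequence on $C\cong\mathbb P^1$. Your deferred verification that $(D-C)\cdot C'\ge -1$ survives each step is exactly the point the paper handles by showing the curves in $F$ are pairwise disjoint, and your sketch (forcing $D-m(C+C')$ effective for all $m$) is the right argument.
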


The knowledge of the nef cone of $X_r^2$
is thus of a fundamental importance
for all the above conjectures.
Recent results about its shape are given 
in~\cite{df,chmr,cmr2}. We proved similar 
results to those in~\cite{cmr2} in an
effort to understand the nef cone in the
special case of $X_{10}^2$, see 
Remark~\ref{remeff}.
Finally we discuss the higher dimensional 
case proving the following.

\begin{introthm}
\label{thm:Pn}
If $r < 2^n$ then any nef divisor of $X_r^n$
is asymptotically non-special.
\end{introthm}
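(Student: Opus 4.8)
The plan is to convert the statement into a classical non-speciality problem for fat points in $\mathbb P^n$ and then to settle that problem, trivially for $n\le 3$ and by degeneration for $n\ge 4$. Write $D=dH-\sum_{i=1}^r m_iE_i$. Intersecting $D$ with a line contained in some $E_i$ gives $m_i\ge 0$, and intersecting with the strict transform $H-E_i-E_j$ of a line through $p_i,p_j$ gives $d\ge m_i+m_j$ for $i\ne j$. Since all $m_i\ge 0$, the blow-down $\pi\colon X_r^n\to\mathbb P^n$ satisfies $R^q\pi_*\mathcal O_{X_r^n}(-\sum m_iE_i)=0$ for $q>0$ (the sheaf $\mathcal O(-E_i)$ restricts to $\mathcal O_{\mathbb P^{n-1}}(1)$ on $E_i$ and to bundles with vanishing higher cohomology on its infinitesimal neighbourhoods), so the Leray spectral sequence yields, for every $k\ge 1$,
\[
h^i\bigl(X_r^n,\mathcal O(kD)\bigr)=h^i\bigl(\mathbb P^n,\mathcal I_{Z_k}(kd)\bigr),\qquad Z_k:=\sum_{i=1}^r km_i\,p_i .
\]
From the structure sequence of $Z_k$ one reads $h^i(\mathbb P^n,\mathcal I_{Z_k}(kd))=0$ for all $i\ge 2$, while $h^1(\mathbb P^n,\mathcal I_{Z_k}(kd))=0$ is equivalent to $Z_k$ imposing independent conditions on $|\mathcal O_{\mathbb P^n}(kd)|$, which in turn forces $h^0(X_r^n,\mathcal O(kD))=v(kD)+1$ and hence $\dim|kD|=e(kD)$. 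Note that bigness of $D$ plays no role here, so the nef-but-not-big case needs no separate treatment. Thus the theorem reduces to showing: \emph{if $r<2^n$, $d\ge m_i+m_j$ $(i\ne j)$ and $m_i\ge 0$, then $Z_k$ is non-special in degree $kd$ for $k\gg 0$.}

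For $n\le 3$ this follows at once from Kawamata--Viehweg vanishing. Write $L:=2H-\sum_{i=1}^rE_i$; a short dimension count shows $L$ is nef, and $L^n=2^n-r>0$ exactly because $r<2^n$, so $L$ is nef and big. Since $-K_{X_r^n}=(n+1)H-(n-1)\sum E_i=(3-n)H+(n-1)L$ has non-negative coefficients for $n\le 3$, the class $-K_{X_r^n}$ is nef and big; hence $kD-K_{X_r^n}$ is nef and big for every $k\ge 0$, and Kawamata--Viehweg gives $h^i(X_r^n,\mathcal O(kD))=h^i\bigl(X_r^n,\,K_{X_r^n}+(kD-K_{X_r^n})\bigr)=0$ for all $i>0$ and all $k$ — so for $n\le 3$ every nef class is in fact non-special. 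For $n\ge 4$ this breaks down ($-K_{X_r^n}$ is no longer nef), and I would instead prove the interpolation statement by induction on $n$ via the method of Horace. By upper semicontinuity of $h^1$ we may specialize part of the configuration onto a general hyperplane $\Lambda\cong\mathbb P^{n-1}$ and use the Castelnuovo residuation sequence
\[
0\longrightarrow\mathcal I_{\operatorname{Res}_\Lambda Z_k}(kd-1)\xrightarrow{\ \cdot\ell\ }\mathcal I_{Z_k}(kd)\longrightarrow\mathcal I_{Z_k\cap\Lambda,\,\Lambda}(kd)\longrightarrow 0 ,
\]
which reduces $h^1(\mathcal I_{Z_k}(kd))=0$ to the vanishing of $h^1$ of the trace on $\mathbb P^{n-1}$ and of $h^1$ of the residual on $\mathbb P^n$ in degree $kd-1$. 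The hypothesis $r<2^n$ enters precisely here: one can always keep at most $2^{n-1}-1$ of the points on $\Lambda$, so the trace falls under the inductive hypothesis in $\mathbb P^{n-1}$, while iterating the residuation (peeling the multiplicities off through further hyperplanes) eventually lands in $\mathbb P^1$, where every $0$-dimensional scheme is non-special in the relevant degree; the passage from $D$ to $kD$ with $k\to\infty$ supplies the slack in the degree inequalities $kd-t\ge\cdots$ that occur along the way, which is why the conclusion is only asymptotic. The base case $n=1$ is trivial since $r<2$ means $r\le 1$.

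The crux — and the step I expect to occupy most of the work — is making this last bookkeeping actually close: with up to $2^n-1$ points and only the nef inequality $d\ge m_i+m_j$ in hand, one must route the $r$ points through a descending chain $\mathbb P^n\supset\mathbb P^{n-1}\supset\cdots\supset\mathbb P^1$, placing at most $2^{k-1}-1$ of them into each $\mathbb P^{k-1}$, without ever running out of degree. I would expect the plain Horace method to be slightly too weak for this and that one must use the differential Horace method of Alexander--Hirschowitz (specializing ``half'' of a fat point), together with separate care for the extremal case $r=2^n-1$. Everything else — the reduction above, the vanishing $h^{\ge 2}=0$, and the deduction $\dim|kD|=e(kD)$ — is routine.
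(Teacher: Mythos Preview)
Your reduction to fat-point interpolation is clean, and the Kawamata--Viehweg argument for $n\le 3$ is correct (indeed it gives outright non-speciality of every nef class there, not merely asymptotic non-speciality); the one soft spot is ``a short dimension count shows $L$ is nef'', which is not a dimension count at all---nefness of $L=2H-\sum E_i$ is precisely the Mori-cone input from \cite{CLO} that the paper also invokes. The genuine gap is everything from $n\ge 4$ on. What you have there is a programme, not a proof: you correctly isolate the difficulty---routing up to $2^n-1$ fat points through a flag $\mathbb P^n\supset\cdots\supset\mathbb P^1$ with only the inequalities $d\ge m_i+m_j$ to spend---and then say you ``would expect'' differential Horace to close it, with ``separate care'' for $r=2^n-1$. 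But that bookkeeping \emph{is} the theorem in this range; none of it is carried out, and Alexander--Hirschowitz style arguments for arbitrary multiplicities are notoriously delicate, so there is no reason to believe it goes through without actually doing it.

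The paper sidesteps interpolation entirely and the argument is uniform in $n$. From \cite{CLO} the nef cone is polyhedral, generated by $H$, $H-E_i$, and $2H-\sum_{i\in I}E_i$; on the degeneration $Y_r^n$ where the $r$ points lie on a complete intersection of $n$ general quadrics, all these generators are visibly semiample, hence every nef $D$ on $Y_r^n$ is semiample with $m_D=1$. The non-big class $H-E_i$ is pulled back from $X_1^n$ and handled directly. For nef and big $D$ one applies Proposition~\ref{pro:cri}: $D$ is asymptotically non-special iff $R^1{f_D}_*\mathcal O_{Y_r^n}=0$ for the semiample fibration $f_D$. But $f_D$ can only contract exceptional divisors (to smooth points) and strict transforms of lines through two of the points (a locally toric contraction, hence rational singularities), so $R^1{f_D}_*\mathcal O=0$. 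Semicontinuity then carries the conclusion from $Y_r^n$ back to $X_r^n$. In short, the paper trades your open-ended Horace induction for a single geometric check of what the semiample map contracts.
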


The paper is organized as follows.
Section one is devoted to the definition
and first properties of asymptotically
special nef and big divisors on surfaces.
In Section~\ref{sec:pf12} we prove Theorem~\ref{asymp}
and Theorem~\ref{nef}, while in Section~\ref{sec:pf3}
we prove Theorem~\ref{thm:Pn}.

\section{Asymptotically special divisors}
\label{sec:as}

Let $X$ be a normal $\mathbb Q$-factorial
projective variety and let $D$ be a divisor 
of $X$. We say that $D$ is {\em asymptotically 
special} if 
\[
 h^1(X,\mathcal O_X(nD))>0,
 \quad
 \text{for $n\gg 0$}.
\]
Similarly we say that $D$ is 
{\em asymptotically non-special} 
if $h^1(X,\mathcal O_X(nD))=0$
for $n\gg 0$.
If $X$ is a surface and 
$D^2>0$ then the quadratic
function $D^\perp\to\mathbb Q$, defined by
$E\mapsto p_a(E) := \frac{1}{2}(E^2+E\cdot K_X) + 1$ is bounded from above.
Indeed, by the Hodge index theorem,  
the intersection form on $D^\perp$
is negative definite, so that the  
function $p_a$ is concave. Thus
the following number is well defined
\[
 p_a(D^\perp)
 :=
 \begin{cases}
 0 & \text{if $D$ is ample}\\
 \max\{p_a(E)\, :\, 
 E\neq 0\text{ effective and }E\cdot D = 0\}
 & \text{otherwise}.
 \end{cases}
\]
The main result of this section is the following.

\begin{proposition}
\label{pa}
Let $X$ be a smooth projective surface and 
let $D$ be a nef and big Cartier divisor of $X$. 
\begin{itemize}
\item
If $p_a(D^\perp) = 0$, then $D$ is 
asymptotically non-special.
\item
If $p_a(D^\perp) = 1$ and $D$ is
disjoint from all curves in $D^\perp$,
or $p_a(D^\perp) \geq 2$,
then $D$ is asymptotically special.
\end{itemize}
\end{proposition}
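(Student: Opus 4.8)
The plan is to split the argument along Artin's rationality criterion, after a common preliminary reduction. Since $D$ is big, Kodaira's lemma gives $D\equiv A'+E'$ with $A'$ an ample and $E'$ an effective $\mathbb Q$-divisor, so any irreducible curve $C$ with $C\cdot D=0$ has $C\cdot E'=-C\cdot A'<0$ and is therefore a component of $E'$; hence there are only finitely many such curves $C_1,\dots,C_s$, their union $N$ is an honest curve, and by the Hodge index theorem (using $D^2>0$) the classes $[C_1],\dots,[C_s]$ span a negative definite lattice, so $N$ is contractible. Moreover $h^2(X,\mathcal O_X(nD))=h^0(X,\mathcal O_X(K_X-nD))=0$ for $n\gg 0$, because an effective $\Gamma\equiv K_X-nD$ would satisfy $\Gamma\cdot D=K_X\cdot D-nD^2<0$, contradicting nefness of $D$; so by Riemann--Roch everything reduces to the asymptotics of $h^0(X,\mathcal O_X(nD))$, equivalently of $h^1(X,\mathcal O_X(nD))$.

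For the first bullet assume $p_a(D^\perp)=0$, the ample case being Serre vanishing. Every nonzero effective $Z$ supported on $N$ is a non-negative combination of the $C_i$, so $Z\cdot D=0$ and $p_a(Z)\le p_a(D^\perp)=0$; this is Artin's numerical criterion for the contraction $f\colon X\to Y$ of $N$ to have rational singularities, whence $R^1f_*\mathcal O_X=0$ and, in its standard strengthening, $R^1f_*\mathcal L=0$ for every $f$-nef line bundle $\mathcal L$. Since $D\cdot C=0$ for all $f$-exceptional $C$ we get $D=f^*\bar D$ for an ample $\mathbb Q$-Cartier $\bar D$ on the normal projective surface $Y$ (it is nef, big and strictly positive on every curve of $Y$, so Nakai--Moishezon applies there), $f_*\mathcal O_X(nD)=\mathcal O_Y(n\bar D)$, and $R^1f_*\mathcal O_X(nD)=0$. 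The Leray sequence then gives $H^1(X,\mathcal O_X(nD))=H^1(Y,\mathcal O_Y(n\bar D))$, which vanishes for $n\gg 0$ by Serre vanishing for the coherent sheaves $\mathcal O_Y(n\bar D)$ (argued over each residue modulo the Cartier index of $\bar D$). Hence $D$ is asymptotically non-special.

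For the second bullet choose a nonzero effective $E$ with $E\cdot D=0$ and $p_a(E)=p:=p_a(D^\perp)$ and tensor $0\to\mathcal O_X(-E)\to\mathcal O_X\to\mathcal O_E\to 0$ with $\mathcal O_X(nD)$. As $\deg\mathcal O_E(nD)=nD\cdot E=0$ we have $\chi(E,\mathcal O_E(nD))=\chi(E,\mathcal O_E)=1-p$, so $h^1(E,\mathcal O_E(nD))=h^0(E,\mathcal O_E(nD))+p-1$; this is $\ge 1$ when $p\ge 2$, and when $p=1$ the hypothesis that $D$ is disjoint from the curves of $D^\perp$ yields an effective divisor in $|D|$ (hence in $|nD|$) avoiding $E$, giving a nowhere-zero section of $\mathcal O_E(nD)$ and again $h^1(E,\mathcal O_E(nD))\ge 1$. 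Finally $H^1(X,\mathcal O_X(nD))\to H^1(E,\mathcal O_E(nD))\to H^2(X,\mathcal O_X(nD-E))$ is exact, and $H^2(X,\mathcal O_X(nD-E))=H^0(X,\mathcal O_X(K_X-nD+E))^\vee=0$ for $n\gg 0$ by the nefness argument above (using $E\cdot D=0$), so $H^1(X,\mathcal O_X(nD))$ surjects onto a nonzero space and $D$ is asymptotically special.

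The main obstacle I expect is upgrading these conclusions from ``for $n$ in a fixed class modulo an index'' to ``for all $n\gg 0$'': the restriction $\mathcal O_X(D)|_C$ to a curve $C$ of $D^\perp$ can be torsion of positive order, and then $h^1(X,\mathcal O_X(nD))$ genuinely oscillates with $n$. In the $p_a(D^\perp)=0$ case this dissolves because the vanishing of $R^1f_*$ is unaffected by an $f$-numerically-trivial twist, so the computation runs uniformly in $n$; in the $p_a(D^\perp)=1$ case it is exactly what the disjointness hypothesis is there to rule out, forcing $\mathcal O_X(D)|_C$ to be trivial rather than merely of degree zero. Two secondary points also need care: that the contraction $f$ is available as a projective normal surface (Artin's contractibility of negative definite configurations, together with projectivity of proper normal surfaces), and that Artin's rationality criterion is invoked in the ``for every effective $Z$'' form equivalent to the fundamental-cycle one.
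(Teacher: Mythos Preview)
Your proof is correct, but the packaging differs from the paper's. The paper proves a single auxiliary lemma (its Lemma~\ref{equiv}): using Lemma~\ref{lem:block} it builds an effective $E$ supported on $D^\perp$ with $E\cdot C_i<0$ and $(-E-K_X)\cdot C_i>0$, so that $nD-E-K_X$ is nef and big for $n\gg0$ and Kawamata--Viehweg gives $H^1(X,\mathcal O_X(nD))\simeq H^1(E,\mathcal O_E(nD))$. Both bullets are then handled on this fixed $E$: for $p_a(D^\perp)=0$ one quotes B\u{a}descu's result that $h^1(E,\mathcal O_E(L))=0$ whenever every subcurve has nonpositive arithmetic genus and $L$ is nef on the components; for $p_a(D^\perp)\ge1$ one embeds the maximizing $E'$ into $E$ and uses the surjection $H^1(E,\cdot)\twoheadrightarrow H^1(E',\cdot)$ coming from $\dim E=1$. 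Your route avoids this lemma entirely. In the first bullet you contract $N$ via Artin and run Leray with the vanishing of $R^1f_*$ for $f$-nef bundles (this ``standard strengthening'' is exactly the content of the B\u{a}descu reference the paper cites, read through the formal function theorem), then finish with Serre vanishing modulo the Cartier index of $\bar D$; this is morally the same mechanism but emphasizes the contraction picture. In the second bullet your argument is genuinely more economical than the paper's: rather than manufacturing a big $E\supseteq E'$ and comparing $H^1$'s on the two curves, you take the restriction sequence to the maximizing $E$ directly and kill the $H^2(X,\mathcal O_X(nD-E))$ term by Serre duality and nefness, which sidesteps both Kawamata--Viehweg and Lemma~\ref{lem:block}. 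Your closing remarks correctly identify the torsion phenomenon in the $p_a=1$ case as the reason for the disjointness hypothesis, matching the paper's Example~\ref{mix}.
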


We postpone the proof of the proposition
until the end of this section and meanwhile
we discuss some of its consequences.

\begin{example}
\label{mix}
In case $p_a(D^\perp) = 1$ the divisor
$D$ can be neither asymptotically
special nor asymptotically non-special.
As an example, consider
the blow-up $X$ of $\mathbb P^2$ at $10$
points $p_1\dots,p_{10}$ on a smooth plane cubic. 
Denote by $H$ the pull back 
of a line, by $E_i$ the exceptional divisor over $p_i$ and let $C$ be 
the strict transform of the given 
cubic (i.e. $C$ is the unique 
element in $|-K_X|$).
We consider a divisor 
$D := 10H-\sum_{i=1}^{10}3E_i$
which restricts to a $2$-torsion 
class on $C$ and we claim that
\[
 h^1(X,\mathcal O_X(nD)) 
 = 
 \begin{cases}
  0 & \text{if $n$ is odd}\\
  1 & \text{if $n$ is even.}
\end{cases}
\]
First of all observe that 
by~\cite[Thm.~III.1]{har},
$C$ is contained in the base locus
of $nD$ if and only if the restriction 
of $nD$ to $C$ is non-trivial, i.e. if
and only if $n$ is odd. If this is the
case, $nD-C$ is effective and
satisfies $(nD-C)\cdot(-K_X) 
= -C^2 = 1$, so 
that $h^1(X,\mathcal O_X(nD)) = 0$
by~\cite[Thm.~III.1]{har}.
On the other hand, if $n$ is even, 
the restriction 
of $nD$ to $C$ is trivial, and this 
implies that $C$ is not contained in the 
base locus of $nD$. Therefore 
$nD$ has no fixed part and 
by~\cite[Thm.~III.1]{har} we
conclude that
$h^1(X,\mathcal O_X(nD)) = 1$. 
\end{example}

\begin{corollary}
\label{cor:spe}
Let $X$ be a normal $\mathbb Q$-factorial
projective surface with rational singularities
and let $C\subseteq X$ be an irreducible 
and reduced curve with $C^2<0$ and
$p_a(C) > 1$. Then there exists an asymptotically 
special nef and big divisor $D$ of $X$ such that 
$D\cdot C = 0$.
\end{corollary}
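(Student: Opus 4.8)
The plan is to reduce the statement to Proposition~\ref{pa} by resolving the singularities of $X$. First I would produce the divisor $D$ directly on $X$: fix an ample Cartier divisor $A$ and a positive integer $m$ with $mC$ Cartier and $m(A\cdot C)\in\mathbb Z$, and set
\[
 D:=m(-C^2)\,A+m(A\cdot C)\,C .
\]
Then $D$ is Cartier and $D\cdot C=m(A\cdot C)\bigl((-C^2)+C^2\bigr)=0$, while for every irreducible curve $C'\neq C$ one has $D\cdot C'\geq m(-C^2)(A\cdot C')>0$ because $A\cdot C'>0$ and $C\cdot C'\geq 0$; hence $D$ is nef. Since
\[
 D^2=m^2(-C^2)\bigl((-C^2)A^2+(A\cdot C)^2\bigr)>0 ,
\]
$D$ is also big. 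So it remains to prove that this nef and big Cartier divisor with $D\cdot C=0$ is asymptotically special.

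Next I would pass to a resolution $f\colon\widetilde X\to X$ and set $\widetilde D:=f^*D$, a nef and big Cartier divisor on the smooth surface $\widetilde X$. As $X$ has rational singularities, $R^1f_*\mathcal O_{\widetilde X}=0$, and since $\widetilde D$ is a pullback the projection formula gives $R^qf_*\mathcal O_{\widetilde X}(n\widetilde D)=\mathcal O_X(nD)\otimes R^qf_*\mathcal O_{\widetilde X}$, which equals $\mathcal O_X(nD)$ for $q=0$ and vanishes for $q\geq 1$; the Leray spectral sequence then degenerates and $h^1(X,\mathcal O_X(nD))=h^1(\widetilde X,\mathcal O_{\widetilde X}(n\widetilde D))$ for every $n$. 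Hence $D$ is asymptotically special on $X$ if and only if $\widetilde D$ is asymptotically special on $\widetilde X$, and by Proposition~\ref{pa} it suffices to exhibit an effective divisor $E$ on $\widetilde X$ with $\widetilde D\cdot E=0$ and $p_a(E)\geq 2$.

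Let $\widetilde C$ be the strict transform of $C$ and $E_1,\dots,E_k$ the $f$-exceptional curves. Since $\widetilde D$ is a pullback, $\widetilde D\cdot\widetilde C=D\cdot C=0$ and $\widetilde D\cdot E_i=0$, so every effective divisor supported on $\widetilde C\cup E_1\cup\dots\cup E_k$ lies in $\widetilde D^{\perp}$. Among these I would choose an integral divisor $E=\widetilde C+Z$, with $Z$ an exceptional cycle, characterized by the property $f_*\mathcal O_E=\mathcal O_C$ (when $C$ is Cartier this is simply $E=f^*C$). Rationality of the singularities then forces $R^1f_*\mathcal O_E=0$, so that $\chi(\mathcal O_E)=\chi(f_*\mathcal O_E)=\chi(\mathcal O_C)$ and therefore $p_a(E)=1-\chi(\mathcal O_E)=p_a(C)>1$; since $p_a(E)$ is an integer, $p_a(E)\geq 2$. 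Thus $p_a(\widetilde D^{\perp})\geq 2$, Proposition~\ref{pa} shows $\widetilde D$ — and hence $D$ — is asymptotically special, and $D\cdot C=0$, as required.

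The main obstacle is the third step: the construction of the integral cycle $E$ together with the equality $p_a(E)=p_a(C)$. The naive choice $E=\widetilde C$ is insufficient, because the strict transform may have strictly smaller arithmetic genus than $C$ — the resolution can partially normalize $C$ at a singular point of $X$ — and the genus lost there must be recovered from the exceptional fibre, whose contribution comes exactly from the loops it creates with $\widetilde C$. The conceptual way to bookkeep this is the identity $\chi(\mathcal O_E)=\chi(\mathcal O_C)$, for which the key input is the vanishing $R^1f_*\mathcal O_E=0$, a consequence of rationality of the singularities (equivalently, of Artin's vanishing $H^1(\mathcal O_W)=0$ for effective exceptional cycles $W$). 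A further technical point is the precise choice of the exceptional part $Z$ when $C$ is only $\mathbb Q$-Cartier rather than Cartier, where $f^*C$ is no longer an integral divisor.
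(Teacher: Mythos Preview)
Your construction of $D$ and the reduction via a resolution are both sound, and in the Cartier case your genus identity $p_a(f^*C)=p_a(C)$ is correct (the Leray argument you give works exactly as stated). The route, however, is genuinely different from the paper's. The paper's proof is three lines: choose $D$ in the relative interior of the facet $C^\perp\cap\operatorname{Nef}(X)$, observe $p_a(D^\perp)\ge p_a(C)>1$ because $C$ itself lies in $D^\perp$, and invoke Proposition~\ref{pa} directly on $X$. No resolution is taken; the point is that the \emph{proof} of Proposition~\ref{pa} runs through Lemma~\ref{equiv}, which is already stated for $\mathbb Q$-factorial surfaces with rational singularities, so the smoothness hypothesis in Proposition~\ref{pa} is not actually used. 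Your approach trades that observation for a pullback to the smooth model, which is more faithful to Proposition~\ref{pa} as written but forces you to transport the genus inequality across $f$. Your explicit formula for $D$ is equivalent to the paper's facet description (it lands in the relative interior because $D\cdot C'>0$ for every $C'\neq C$), only more concrete. One small imprecision: $A\cdot C\in\mathbb Z$ is automatic since $A$ is Cartier; the integrality condition you actually need on $m$ is $mC^2\in\mathbb Z$.

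The obstacle you flag in the $\mathbb Q$-Cartier case is a real gap in your route, not merely cosmetic: when $f^*C$ has non-integral coefficients there is no canonical integral $E$ with $f_*\mathcal O_E=\mathcal O_C$, and neither rounding $f^*C$ nor replacing $C$ by a Cartier multiple $mC$ works directly (the latter because $p_a(mC)=1+\tfrac12(m^2C^2+mC\cdot K_X)\to -\infty$ as $m\to\infty$ since $C^2<0$). The cleanest fix is simply to notice, as the paper implicitly does, that the argument of Proposition~\ref{pa} already applies on $X$; alternatively, staying on $\widetilde X$, choose the resolution so that $f^{-1}\mathcal I_C\cdot\mathcal O_{\widetilde X}$ is invertible and take $E$ to be the effective Cartier divisor it defines, which again satisfies $f_*\mathcal O_E=\mathcal O_C$ and $R^1f_*\mathcal O_E=0$.
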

\begin{proof}
Being $C^2 < 0$, the intersection $C^\perp\cap {\rm Nef}(X)$
is a facet of the nef cone of $X$. Let $D$ 
be a Cartier divisor whose class lies in the relative interior 
of this facet. Then $D$ is nef and big and
$C\cdot D = 0$.
Since $p_a(D^\perp)\geq p_a(C) > 1$, Proposition~\ref{pa} 
implies that $D$ is asymptotically special.
\end{proof}

In order to prove Proposition~\ref{pa}
we need some preliminary lemmas. 
We begin with the following result which is well known in literature.

\begin{lemma}
 \label{ort}
Let $X$ be a normal $\mathbb Q$-factorial
projective surface and let $C_1,\dots,C_n
\subseteq X$
be distinct irreducible and reduced curves
whose intersection matrix is negative
definite. Then the classes $[C_1],\dots,
[C_n]$ are linearly independent in 
$N^1(X)$. In particular if $D$ is 
a nef and big divisor of $X$, then 
$D^\perp$ contains finitely many classes of 
irreducible curves. 
\end{lemma}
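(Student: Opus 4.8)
The statement has two parts: linear independence of the curve classes, and finiteness of irreducible curves orthogonal to a nef and big divisor.

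For the first part, the plan is the standard Zariski-style argument. Suppose $\sum a_i[C_i]=0$ in $N^1(X)$ with the $a_i\in\mathbb Q$ not all zero. Separate the indices into those with $a_i>0$ and those with $a_i\le 0$, writing $A=\sum_{a_i>0}a_iC_i$ and $B=\sum_{a_j\le 0}(-a_j)C_j$, so that $A\equiv B$ numerically, $A,B$ are effective $\mathbb Q$-divisors with disjoint supports, and at least one of them, say $A$, is nonzero. Then $A^2=A\cdot B=\sum_{a_i>0,\,a_j\le0}a_i(-a_j)\,C_i\cdot C_j\ge 0$, since distinct irreducible curves meet non-negatively. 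But $A$ is a nonzero rational combination of the $C_i$ with support contained in $\{C_1,\dots,C_n\}$, so by negative definiteness of the intersection matrix $A^2<0$, a contradiction. Hence the classes are linearly independent; in particular $N^1(X)$ contains at most $\rho(X)=\dim_{\mathbb Q}N^1(X)$ curves with negative definite intersection matrix.

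For the second part, let $D$ be nef and big. By the Hodge index theorem the intersection form is negative definite on the orthogonal complement $D^\perp\subseteq N^1(X)_{\mathbb R}$. If $C\subseteq X$ is an irreducible reduced curve with $D\cdot C=0$, then $[C]\in D^\perp$; moreover $C^2<0$, because a nef and big divisor has positive intersection with any curve it does not contract, and if $C^2\ge 0$ then $C$ would itself be big, forcing $D\cdot C>0$ (more precisely, $D$ big implies $D\equiv A+N$ with $A$ ample and $N$ effective, and $D\cdot C=0$ with $C\not\subseteq\mathrm{Supp}(N)$ gives $A\cdot C\le 0$, impossible, while $C\subseteq\mathrm{Supp}(N)$ can occur for only finitely many $C$). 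Any finite set $C_1,\dots,C_n$ of such curves has intersection matrix that is a principal submatrix of the negative definite form on $D^\perp$, hence negative definite, so by the first part $n\le\rho(X)$. Therefore $D^\perp$ contains at most $\rho(X)$ classes of irreducible curves, which is finite.

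The only delicate point is justifying that every irreducible curve orthogonal to a nef and big $D$ has negative self-intersection (equivalently, lies in the negative definite cone); this is where bigness, not just nefness, is used, and it is handled by the Zariski-decomposition or Nakai–Moishezon-type argument sketched above. Everything else is the textbook negative-definiteness argument, so I do not expect a substantive obstacle.
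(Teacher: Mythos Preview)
Your approach matches the paper's: the positive/negative splitting $A-B$ for the first part, and the Hodge index theorem for the second. Two remarks, one a small gap and one a simplification.

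First, in the second part you assert that the intersection matrix of $C_1,\dots,C_n$ is ``a principal submatrix of the negative definite form on $D^\perp$, hence negative definite.'' This is circular: the Gram matrix of vectors in a negative definite space is only negative \emph{semi}-definite a priori, and it is definite precisely when the vectors are linearly independent---which is what you are trying to conclude in order to invoke the first part. The fix is immediate: rerun the $A$--$B$ argument directly, using that $[A]\in D^\perp$ together with negative definiteness of the form on the subspace $D^\perp$ (rather than of the intersection matrix) to get $A^2\le 0$, with equality forcing $[A]=0$ in $N^1(X)$, hence $A=0$ since $A$ is effective on a projective surface. This is what the paper's terse ``so that the statement follows'' is pointing to.

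Second, your ``only delicate point'' is a red herring. You never need to establish $C^2<0$ separately via a Kodaira-lemma decomposition $D\equiv A+N$: once Hodge gives that the form on $D^\perp$ is negative definite, $C^2<0$ is immediate from $[C]\in D^\perp$ and $[C]\ne 0$. More to the point, the finiteness argument does not use $C^2<0$ for individual curves at all---only the bound $n\le\dim D^\perp$ coming from independence.
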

\begin{proof}
Let $D := \sum_ia_iC_i \sim 0$.
Write $D = A - B$, where $A$
is the sum over the positive 
coefficients of $D$ and $B$ over 
the negative ones. 
Then $0 = D^2 = A^2-2A\cdot B
+B^2$, and the fact that the three 
summands
are all non-positive, imply that both 
$A$ and $B$ are linearly equivalent 
to $0$. Since both $A$ and $B$ are
effective divisors and $X$ is complete, 
it follows that $A = B = 0$, so that 
$a_i=0$ for any $i$.

To prove the second statement, observe that
by the Hodge index theorem and
the fact that $D^2>0$, the intersection
form on $D^\bot$ is negative definite,
so that the statement follows.
\end{proof}

A version of the following lemma is proved
in~\cite[Thm. 7.2.1]{Is} for smooth
surfaces. The proof works verbatim in the
$\mathbb Q$-factorial case and we include it
here for the sake of completeness of argument.

\begin{lemma}
\label{lem:block}
Let $C_1,\dots,C_n$ be irreducible curves
on a normal projective $\mathbb Q$-factorial 
surface whose intersection matrix is 
negative definite. Then there exists an
effective divisor $E := \sum_{i=1}^na_iC_i$
such that $E\cdot C_i < 0$ for each $i$.
\end{lemma}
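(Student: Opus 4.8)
The plan is to reduce the statement to a fact about matrices. Let $M := (C_i\cdot C_j)_{1\le i,j\le n}$ be the intersection matrix. It is symmetric and, by hypothesis, negative definite; in particular each diagonal entry $C_i^2$ is negative, and each off-diagonal entry $C_i\cdot C_j$ with $i\ne j$ is $\ge 0$ because $C_i$ and $C_j$ are distinct irreducible curves and hence have no common component (here $\mathbb Q$-factoriality is used only to guarantee that the $C_i$ are $\mathbb Q$-Cartier, so that these intersection numbers are defined and have the expected sign, exactly as in the smooth case). Set $N := -M$, so that $N$ is symmetric positive definite with non-positive off-diagonal entries. It suffices to produce a vector $a\in\mathbb Q^n$ with $a\ge 0$ and $Na=\mathbf 1$, where $\mathbf 1 := (1,\dots,1)^{T}$: then $E:=\sum_i a_iC_i$ satisfies $E\cdot C_j = (Ma)_j = -1 < 0$ for every $j$, in particular $a\ne 0$, and after multiplying $a$ by a suitable positive integer we may take its entries to be non-negative integers, so that $E$ is an effective divisor.

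Thus the whole point is the claim that $N^{-1}$ has non-negative entries, because then $a:=N^{-1}\mathbf 1$ does the job. I would prove this by the classical M-matrix (Stieltjes matrix) argument. Pick $s>0$ larger than every diagonal entry of $N$ and write $N = sI-B$ with $B := sI-N$. Then $B$ has non-negative entries: its diagonal entries are $s-N_{ii}\ge 0$ and its off-diagonal entries are $-N_{ij}=C_i\cdot C_j\ge 0$. Since $N$ is positive definite, every eigenvalue $\mu$ of $B$ satisfies $\mu = s-(\text{eigenvalue of }N) < s$; and since $B$ has non-negative entries, its spectral radius $\rho(B)$ is itself an eigenvalue of $B$ by the Perron--Frobenius theorem, whence $\rho(B)<s$. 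The Neumann series therefore converges and
\[
 N^{-1} = (sI-B)^{-1} = \frac1s\sum_{k\ge 0}\Bigl(\frac{B}{s}\Bigr)^{k},
\]
which is a sum of matrices with non-negative entries; hence $N^{-1}\ge 0$, finishing the argument.

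The only real obstacle is this non-negativity of $N^{-1}$; the rest is bookkeeping. If one wishes to avoid Perron--Frobenius, here is an alternative: let $a$ be the unique minimizer of the strictly convex quadratic $f(x):=x^{T}Nx-2\,\mathbf 1^{T}x$. Vanishing of the gradient gives $Na=\mathbf 1$, so it remains to check $a\ge 0$. Let $a^{+}$ be the vector obtained from $a$ by replacing its negative entries by $0$. Using that the principal submatrix of $N$ indexed by $\{i: a_i<0\}$ is again positive definite, and that the off-diagonal entries of $N$ are $\le 0$, one computes $f(a^{+})\le f(a)$; by uniqueness of the minimizer, $a^{+}=a$, i.e. $a\ge 0$. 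Either route proves the lemma, and as a remark one can note that in fact every $a_i$ may be taken strictly positive: decomposing $\{C_1,\dots,C_n\}$ into connected components of its intersection graph, Perron--Frobenius applied to each (now irreducible) block of $N$ shows the corresponding entries of $a=N^{-1}\mathbf 1$ are $>0$.
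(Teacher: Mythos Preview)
Your primary argument via the Neumann series is correct, but the paper takes a much shorter and more elementary route that avoids M-matrix theory entirely. It simply picks \emph{any} $E=\sum_i a_iC_i$ with $E\cdot C_i<0$ for all $i$ (possible since the intersection matrix is non-singular), splits $E=E_1-E_2$ into its positive and negative parts, and observes
\[
 0 \;\ge\; E\cdot E_2 \;=\; E_1\cdot E_2 - E_2^2 \;\ge\; 0,
\]
the first inequality because $E\cdot C_i<0$ and $E_2$ is a non-negative combination of the $C_i$, the second because $E_1,E_2$ share no component (so $E_1\cdot E_2\ge 0$) and the intersection form is negative definite (so $E_2^2\le 0$). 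Hence $E_2^2=0$, forcing $E_2=0$. Your approach buys a stronger structural fact ($N^{-1}\ge 0$ entrywise), but at the cost of invoking Perron--Frobenius; the paper's two-line computation uses nothing beyond the hypotheses.

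Your optimization alternative, however, contains an error. With $a$ the minimizer of $f(x)=x^TNx-2\,\mathbf 1^Tx$ and $b:=a-a^+$ (so $b$ is supported on $S=\{i:a_i<0\}$), one computes, using $Na=\mathbf 1$,
\[
 f(a^+)-f(a)\;=\;-2a^TNb+b^TNb+2\,\mathbf 1^Tb\;=\;-2\,\mathbf 1^Tb+b^TNb+2\,\mathbf 1^Tb\;=\;b^TNb\;\ge\;0,
\]
the \emph{opposite} of what you claim; the off-diagonal signs play no role here. The easy fix is to compare $a$ with $|a|$ instead of $a^+$: then the quadratic part satisfies $|a|^TN|a|\le a^TNa$ precisely because the off-diagonal entries of $N$ are $\le 0$, while the linear part drops strictly, giving $f(|a|)<f(a)$ unless $a\ge 0$. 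Alternatively one can note that $(a^+)^TNb\ge 0$ from the signs, yet $(a^+)^TNb=\mathbf 1^Tb-b^TNb<0$ if $b\ne 0$; this contradiction is exactly the paper's argument rewritten in matrix form.
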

\begin{proof}
Since the intersection matrix is non-singular
there exists a divisor $E = \sum_{i=1}^na_iC_i$
such that $E\cdot C_i<0$ for any $i$.
Let $E = E_1 - E_2$, where $E_1, E_2$ are effective
divisors with no common support. Then
$0 \geq E\cdot E_2 = (E_1-E_2)\cdot E_2
= E_1\cdot E_2-E_2^2\geq 0$ implies $E_2=0$,
so that $E$ is effective.
\end{proof}

\begin{lemma}
\label{equiv}
Let $X$ be a normal $\mathbb Q$-factorial
projective surface with rational singularities
and let $D$ be a nef and big Cartier divisor 
of $X$ which is not ample.
Then there exists an effective divisor $E$
in $D^\perp$ such that 
\[
 H^1(X,\mathcal O_X(nD))\simeq 
 H^1(E,\mathcal O_E(nD)).
\]
holds for $n\gg 0$.
\end{lemma}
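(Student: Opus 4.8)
The plan is to take $E$ to be a sufficiently large multiple of the effective divisor supported on the curves orthogonal to $D$, and to compare $H^{1}(X,\mathcal O_X(nD))$ with $H^{1}(E,\mathcal O_E(nD))$ through the short exact sequence
\[
 0\to \mathcal O_X(nD-E)\to \mathcal O_X(nD)\to \mathcal O_E(nD)\to 0 .
\]
By Lemma~\ref{ort} there are only finitely many irreducible curves $C_1,\dots,C_s$ with $D\cdot C_i=0$, and the Hodge index theorem together with $D^{2}>0$ shows that their intersection matrix is negative definite; Lemma~\ref{lem:block} then produces an effective divisor $Z=\sum_i a_iC_i$ with $Z\cdot C_i<0$ for every $i$. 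I will set $E:=\ell Z$ for a positive integer $\ell$, chosen large and divisible enough that $E$ is an effective Cartier divisor ($X$ being $\mathbb Q$-factorial); every such $E$ lies in $D^{\perp}$ since $D\cdot C_i=0$.

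The key point is that $nD-E$ is ample for $n\gg 0$. By the Nakai--Moishezon criterion it suffices to check $(nD-E)^{2}>0$ and $(nD-E)\cdot C>0$ for every irreducible curve $C$. As $D\cdot E=0$ we have $(nD-E)^{2}=n^{2}D^{2}+E^{2}>0$ for $n$ large, and $(nD-E)\cdot C_i=-E\cdot C_i>0$ by construction. For the remaining curves I would bound $E\cdot C$ linearly in $D\cdot C$: fixing an ample divisor $A$, the ratio $\big(\sum_iC_i\big)\cdot C\,/\,(A\cdot C)$ is bounded above as $C$ ranges over all irreducible curves, because the hyperplane section $\{\,A\cdot\gamma=1\,\}$ of the Mori cone $\overline{NE}(X)$ is compact; combining this with the fact that $D\cdot C\ge A\cdot C$ whenever $C$ is not a component of the effective part $N$ of a decomposition $D\sim_{\mathbb Q}A+N$ (Kodaira's lemma), and treating the finitely many remaining curves by hand, I obtain a constant $M$ with $E\cdot C\le \ell M\,(D\cdot C)$ for all irreducible $C\notin\{C_1,\dots,C_s\}$. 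Hence $(nD-E)\cdot C\ge (n-\ell M)(D\cdot C)>0$ once $n>\ell M$, so $nD-E$ is ample for $n$ large.

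To finish I would show $H^{1}(X,\mathcal O_X(nD-E))=H^{2}(X,\mathcal O_X(nD-E))=0$ for $n\gg 0$ and substitute this into the long exact cohomology sequence of the displayed sequence, which then collapses to the desired isomorphism $H^{1}(X,\mathcal O_X(nD))\simeq H^{1}(E,\mathcal O_E(nD))$. For the vanishing I would invoke Fujita's vanishing theorem: for a fixed ample Cartier divisor $H$ there is $m_{0}$ with $H^{i}(X,\mathcal O_X(m_{0}H+P))=0$ for all $i>0$ and all nef divisors $P$. Having chosen $\ell$ so large that $\ell\,(-Z\cdot C_i)\ge m_{0}\,(H\cdot C_i)$ for every $i$, the divisor $P:=nD-E-m_{0}H$ is nef for $n\gg 0$: it is nonnegative on each $C_i$ by the choice of $\ell$, and on every other irreducible curve by the same linear estimate as above. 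Therefore $H^{i}(X,\mathcal O_X(nD-E))=H^{i}(X,\mathcal O_X(m_{0}H+P))=0$ for $i=1,2$ and $n\gg 0$, as required.

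The main obstacle is exactly the ampleness of $nD-E$ for $n\gg 0$: one must rule out the existence of infinitely many irreducible curves $C$ on which $E\cdot C$ outgrows $D\cdot C$, and this is where the compactness of a hyperplane section of the Mori cone is used. The passage from $Z$ to a large multiple $\ell Z$ is forced only by the constant $m_{0}$ appearing in Fujita's theorem and is otherwise harmless; alternatively, the $H^{2}$ vanishing may be obtained directly from Serre duality, using that $X$, having rational singularities, is Cohen--Macaulay and $\omega_X\otimes\mathcal O_X(E-nD)$ has negative degree against an ample class.
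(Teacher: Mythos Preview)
Your argument is correct and follows the same skeleton as the paper's proof: construct $E$ supported on the finitely many $D$--orthogonal curves via Lemmas~\ref{ort} and~\ref{lem:block}, then kill $H^1$ and $H^2$ of $\mathcal O_X(nD-E)$ for $n\gg 0$ and read off the isomorphism from the short exact sequence. The substantive difference is the vanishing theorem. The paper enlarges $E$ until $(-E-K_X)\cdot C_i>0$, observes that $nD-E-K_X$ is then nef and big for $n\gg 0$, and applies Kawamata--Viehweg for surfaces with rational singularities. You instead invoke Fujita vanishing: fix an ample $H$ with Fujita constant $m_0$, enlarge $E$ until $-E\cdot C_i\ge m_0(H\cdot C_i)$, and check that $P=nD-E-m_0H$ is nef for $n\gg 0$. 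Both choices require knowing that ``$nD$ minus a fixed divisor'' becomes nef once it is nonnegative on the $C_i$; you spell this out via the compact cross--section of $\overline{NE}(X)$ and Kodaira's lemma, which is exactly the justification the paper's one--line assertion needs. Two remarks: your detour through Nakai--Moishezon to prove $nD-E$ ample is unnecessary, since only the vanishing of $H^i(\mathcal O_X(nD-E))$ is used; and in your Kodaira--lemma step one really gets $D\cdot C\ge \tfrac1m\,A\cdot C$ rather than $D\cdot C\ge A\cdot C$, but the extra constant is harmless. A pleasant byproduct of your route is that Fujita vanishing holds on arbitrary projective schemes, so your argument does not actually use the rational--singularities hypothesis, whereas the paper's use of Kawamata--Viehweg does.
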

\begin{proof}
By Lemma~\ref{ort} there are only
finitely many irreducible and reduced
curves $C_1,\dots,C_s$ in $D^\bot$.
By Lemma~\ref{lem:block} there exists 
an effective divisor $E$, supported at these
curves, such that $E\cdot C_i < 0$ for any $i$.
Up to replace $E$ with a positive 
multiple, we can assume $E$ to be Cartier
and $(-E-K_X)\cdot C_i>0$ for any $i$.
For any $n>0$, we have the following 
exact sequence of sheaves 
\[
 \xymatrix{
 0\ar[r] &
 \mathcal O_X(nD-E)\ar[r] &
 \mathcal O_X(nD)\ar[r] &
 \mathcal O_E(nD)\ar[r] &
 0.
 }
\]
By the above assumption on $E$ the
divisor $nD-E-K_X$ is nef and big for $n\gg 0$.
Thus we conclude by passing to cohomology 
and using the Kawamata-Viehweg vanishing 
theorem for Cartier divisors on varieties 
with rational singularities~\cite{ko}.
\end{proof}

\begin{proof}[Proof of Proposition~\ref{pa}]
If $D$ is ample, then the statement follows 
from the Serre vanishing theorem
\cite[Thm. 1.2.6]{laz}.
Assume now that $D$ is not ample.
If $p_a(D^\perp) = 0$ then 
$h^1(E,\mathcal O_E) = 0$ 
for any effective divisor $E$
in $D^\perp$, by~\cite[Lem. 3.3]{ba}. 
Thus
\[
 h^1(E,\mathcal O_E(nD)) = 0
\]
for any $n>0$ by~\cite[Cor. 3.6]{ba}.
We conclude that $D$ is 
asymptotically non-special
by Lemma~\ref{equiv}.

Assume now $p_a(D^\perp)\geq 1$.
Let $E'$ be an effective divisor in
$D^\perp$ such that $p_a(E') = p_a(D^\perp)$.
Since $\deg \mathcal O_{E'}(nD) = 0$
it follows that
\[
 \chi(E',\mathcal O_{E'}(nD)) 
 = 
 \chi(E',\mathcal O_{E'})
 =
 1-p_a(E')\leq 0,
\]
where the first equality is by
\cite[\href{https://stacks.math.columbia.edu/tag/0BRZ}{Tag 0BRZ}]{stacks-project} 
and the second is by the definition
of arithmetic genus.
If $p_a(E')\geq 2$ or $p_a(E')=1$ and
$\mathcal O_{E'}(nD)\simeq \mathcal O_{E'}$
we deduce $h^1(E',\mathcal O_{E'}(nD)) > 0$.
The divisor $E$ in Lemma~\ref{equiv}
can be chosen so that $E'\leq E$.
If we denote by $I$ be the ideal sheaf of $E'$ in $\mathcal O_{E}$, we have
an exact sequence of sheaves 
\[
 \xymatrix{
 0\ar[r] & 
 I(nD)\ar[r] &
 \mathcal O_{E}(nD)\ar[r] &
 \mathcal O_{E'}(nD)\ar[r] &
 0.
 }
\]
Passing to cohomology and using the
fact that we are in dimension one
we get the surjection 
$H^1(E,\mathcal O_{E}(nD))
\to H^1(E',\mathcal O_{E'}(nD))$, 
which implies
$h^1(E,\mathcal O_{E}(nD))>0$,
so that $D$ is asymptotically 
special by Lemma~\ref{equiv}.
\end{proof}

\section{Proof of Theorems~\ref{asymp}
and ~\ref{nef}}
\label{sec:pf12}
Recall that $\pi\colon X_r^2\to\mathbb P^2$
is the blowing-up of the projective
plane at $r$ points in very general
position with exceptional divisors
$E_1,\dots,E_r$ and $H$ is the
pullback of a hyperplane.
We denote by ${\rm Eff}(X_r^2)$ the cone of classes
of effective divisors of $X_r^2$, by 
$\overline{\rm Eff}(X_r^2)$ its closure 
in the Euclidean topology and by 
${\rm Nef}(X_r^2)$ the dual of the latter
closed cone with respect to the bilinear
form defined by the intersection product.
Elements of $\overline{\rm Eff}(X_r^2)$ are
called pseudoeffective classes, while
elements of ${\rm Nef}(X_r^2)$ are nef classes.
Let
\[
 Q(X_r^2) :=
 \{D\in {\rm Pic}(X_r^2)_{\mathbb R}\, :\,
 D^2 \geq 0\text{ and }D\cdot H\geq 0\}
\]
be the {\em positive light cone} of $X_r^2$.
Recall the following inclusions
\[
 {\rm Nef}(X_r^2)
 \subseteq
 Q(X_r^2)
 \subseteq
 \overline{\rm Eff}(X_r^2).
\]
Let $W(X_r^2)$ be the subgroup of isometries
of ${\rm Pic}(X_r^2)$ generated  by the reflections
\[
 D\mapsto D+(D\cdot R)R,
\]
where $R$ is one of the {\em fundamental roots}
(classes of self-intersection $-2$):
$E_1-E_2,\dots,E_{r-1}-E_r,H-E_1-E_2-E_3$.
It is not difficult to show that any such
reflection preserves all the cones defined above.
The choice of a distinguished representative 
for an orbit of $W(X_r^2)$ contained in the 
effective cone leads to the following definition.

\begin{definition}
A class $D := dH-\sum_{i=1}^rm_iE_i
\in {\rm Pic}(X_r^2)_{\mathbb R}$ is {\em pseudostandard} if
\[
 d\geq m_1+m_2+m_3
 \quad
 \text{and}
 \quad
 m_1\geq\cdots\geq m_r.
\]
If in addition $m_r\geq 0$ then the
class $D$ is {\em standard}.
\end{definition}

The first inequality in the definition is equivalent
to $D\cdot (H-E_1-E_2-E_3)\geq 0$, while the 
ordering of the multiplicities comes from 
$D\cdot (E_i-E_{i+1})\geq 0$.
In what follows, with abuse of notation, we will 
denote by $-K_s$ the class $3H-\sum_{i=1}^sE_i$,
for any $s\leq r$.


\begin{lemma}
\label{std}
Let $D\in {\rm Pic}(X_r^2)_{\mathbb R}$ be a 
standard class.
If $D^2\leq 0$ and $D\cdot K\leq 0$ then
$D$ is a positive multiple of either 
$H-E_1$ or $-K_9$. 
\end{lemma}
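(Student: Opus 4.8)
The two numerical hypotheses are purely combinatorial once we write $D = dH - \sum_{i=1}^{r}m_iE_i$: since $K = -3H+\sum_i E_i$ we have $D\cdot K = \sum_i m_i - 3d$ and $D^2 = d^2-\sum_i m_i^2$, so the assumptions read $\sum_i m_i \le 3d$ and $\sum_i m_i^2 \ge d^2$, while standardness gives $m_1\ge\cdots\ge m_r\ge 0$ and $m_1+m_2+m_3\le d$. If $d=0$ the second inequality forces $D=0$, so I would assume $d>0$. The plan is to prove the purely numerical statement
\[
 m_1\ge\cdots\ge m_r\ge 0,\quad m_1+m_2+m_3\le d,\quad \sum_i m_i\le 3d
 \ \Longrightarrow\ \sum_i m_i^2\le d^2,
\]
together with a description of when equality holds. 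Combined with $\sum_i m_i^2\ge d^2$ this gives $\sum_i m_i^2=d^2$, i.e. $D^2=0$, and puts us in the equality case.

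To prove the implication I would bound the tail using $m_i\le m_3$ for $i\ge 3$:
\[
 \sum_i m_i^2 \le m_1^2+m_2^2+m_3\sum_{i\ge 3}m_i
 \le m_1^2+m_2^2+m_3(3d-m_1-m_2)=:\Phi(m_1,m_2,m_3),
\]
the second step using $\sum_i m_i\le 3d$. Because $m_1+m_2\le d<3d$, the coefficient $3d-m_1-m_2$ of $m_3$ is positive, so for fixed $m_1,m_2$ the quantity $\Phi$ is increasing in $m_3$, hence largest when $m_3$ attains its maximum admissible value $\min\{m_2,\;d-m_1-m_2\}$. Splitting according to which term realises this minimum, $\Phi$ becomes an explicit quadratic on a triangle in the $(m_1,m_2)$-plane: when $m_3=d-m_1-m_2$ (i.e. on $m_1+2m_2\ge d$, $m_1+m_2\le d$, $m_1\ge m_2$) one gets a convex function whose maximum over that triangle, with vertices $(d,0),(d/2,d/2),(d/3,d/3)$, is attained at a vertex, the values being $d^2,\ d^2/2,\ d^2$; when $m_3=m_2$ (i.e. on $m_1+2m_2\le d$, $m_1\ge m_2\ge 0$) one gets $m_1^2-m_1m_2+3dm_2$, which for fixed $m_1$ is linear and increasing in $m_2$ and is again $\le d^2$ after a routine one–variable check. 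In both cases $\Phi\le d^2$, with equality only at $(m_1,m_2,m_3)\in\{(d,0,0),\,(d/3,d/3,d/3)\}$.

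Now suppose $D^2\le 0$. Then every inequality in the chain above is forced to be an equality, and I would read off the two classes as follows. Equality in $\sum_{i\ge 3}m_i^2=m_3\sum_{i\ge 3}m_i$ forces $m_i\in\{0,m_3\}$ for all $i\ge 3$, so the nonzero multiplicities beyond the second all equal $m_3$; equality $\Phi=d^2$ forces $(m_1,m_2,m_3)$ to be $(d,0,0)$ or $(d/3,d/3,d/3)$. In the first case every $m_i$ with $i\ge 2$ vanishes and $D=d(H-E_1)$. In the second case $m_3=d/3>0$, so the remaining equality $\sum_i m_i=3d$ forces exactly nine of the $m_i$ to equal $d/3$ and the rest to vanish (in particular $r\ge 9$), whence $D=dH-\tfrac d3\sum_{i=1}^9E_i=\tfrac d3(-K_9)$. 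Since $D\ne 0$ we have $d>0$ in either case, so $D$ is a positive multiple of $H-E_1$ or of $-K_9$.

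The main obstacle is getting the inequality $\sum_i m_i^2\le d^2$ sharp enough: the crudest estimates (for instance $m_1^2+m_2^2\le (m_1+m_2)^2$ together with $\sum_{i\ge 3}m_i^2\le \tfrac d3\sum_{i\ge 3}m_i$) leak a term of size $O(d\,m_3)$ and never close. What makes it go through is to keep $m_3$ as an independent variable inside $\Phi$ and play the ordering bound $m_3\le m_2$ against the positivity bound $m_3\le d-m_1-m_2$ supplied by standardness; the monotonicity of $\Phi$ in $m_3$ then reduces everything to a two–variable, piecewise–quadratic optimisation whose boundary analysis — and the attendant bookkeeping of the equality cases — is routine.
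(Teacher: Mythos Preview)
Your argument is correct. It is closely related to the paper's, but the execution differs in an instructive way. Both proofs isolate the tail estimate $\sum_{i\ge 3}m_i^2\le m_3\sum_{i\ge 3}m_i$ and then reduce to the key head inequality, which in your notation is $\Phi(m_1,m_2,m_3)\le d^2$ and in the paper's is the equivalent statement $d(d-3m_3)-(m_1^2-m_1m_3)-(m_2^2-m_2m_3)\ge 0$. The paper proves this in one stroke: it forms the single linear combination $D^2+m_3\,D\cdot K\le 0$, substitutes $d\ge m_1+m_2+m_3$, and collapses everything to $2(m_1m_2-m_3^2)\ge 0$. You instead run a genuine two–variable convex optimisation, splitting on which of $m_2$ and $d-m_1-m_2$ bounds $m_3$ and checking vertices. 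The paper's algebraic trick is much shorter and avoids any case analysis; your route is longer but has the virtue of making the equality cases completely explicit, whereas the paper's presentation is a bit terse at the end (it asserts ``all the multiplicities are equal'' and only afterwards recovers that exactly nine are nonzero from $D^2=D\cdot K=0$). If you want to streamline, note that your $\Phi\le d^2$ is literally $D^2+m_3\,D\cdot K\le 0$ after dropping the nonpositive tail, so the paper's substitution $d\mapsto m_1+m_2+m_3$ would let you replace the whole triangle analysis by the single line $\Phi\le d^2-2(m_1m_2-m_3^2)\le d^2$.
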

\begin{proof}
Let $D = dH - \sum_{i=1}^rm_iE_i$.
Since $D$ is standard we have 
that $d\geq m_1+m_2+m_3$ and 
 $m_1\geq\cdots\geq m_r\geq 0$.
The inequalities $D^2 \leq 0$ and 
$D\cdot K_r \leq 0$ imply 
$D^2+m_3D\cdot K_r \leq 0$, that is
\begin{align*}
 &d(d-3m_3)-\sum_{i=1}^r(m_i^2-m_3m_i)=0\\
 \Rightarrow 
 &d(d-3m_3) - (m_1^2-m_1m_3)-(m_2^2-m_2m_3)
 = \sum_{i=3}^r(m_i^2-m_3m_i)\leq 0.
\end{align*}
Since $d\geq m_1+m_2+m_3$ the left hand 
side is greater than or equal to
$(m_1+m_2+m_3)(m_1+m_2+m_3-3m_3)-
 (m_1^2-m_1m_3)-(m_2^2-m_2m_3)
 = 2(m_1m_2-m_3^2)$, from which we 
 deduce
\[
 0\leq 
 2(m_1m_2-m_3^2)\leq 
 \sum_{i=3}^r(m_i^2-m_3m_i)\leq 0.
\]
Therefore either $m_2=\cdots=m_r=0$
or all the multiplicities are equal. 
In the first case $D^2\leq 0$ imply
$d^2\leq m_1^2$, so that $d=m_1$ and
$D$ is a multiple of $H-E_1$. 
In the second case
$D = dH - \sum_{i=1}^rmE_i$.
The inequalities $0\leq d^2-3md
= D^2+mD\cdot K_r\leq 0$ imply $d=3m$
and $D^2 = D\cdot K_r = 0$, which proves
the statement.
\end{proof}

\begin{corollary}
\label{cor:neg}
The surface $X_r^2$ does not contain $(-2)$-curves
and the classes of $(-1)$-curves form an orbit
for $W(X_r^2)$.
\end{corollary}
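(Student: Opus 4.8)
The plan is to derive both statements from Lemma~\ref{std} by means of the classical Cremona reduction, performed inside the group $W(X_r^2)$. The key point to keep in mind is that the generating reflections are geometrically realized: $s_{E_i-E_{i+1}}$ is induced by the automorphism of $X_r^2$ interchanging $p_i$ and $p_{i+1}$, and $s_{H-E_1-E_2-E_3}$ by the quadratic Cremona transformation centred at $p_1,p_2,p_3$, which becomes an isomorphism of $X_r^2$ once one relabels the images of $p_4,\dots,p_r$ (again in very general position). Hence every element of $W(X_r^2)$ is an isometry fixing $K_{X_r^2}$, it preserves ${\rm Eff}(X_r^2)$, and it sends the class of an irreducible curve to the class of an irreducible curve: $s_{H-E_1-E_2-E_3}$ interchanges $[E_i]$ with $[H-E_j-E_k]$ for $\{i,j,k\}=\{1,2,3\}$, and takes the class of any other irreducible curve to the class of an irreducible curve.

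First I would show $X_r^2$ has no $(-2)$-curve. Suppose $C$ is one; then $C^2=-2$ and, by the adjunction formula, $C\cdot K_{X_r^2}=0$. Since $C$ is irreducible with $C^2\neq -1$, it is none of the $E_i$, so $C\cdot E_i\geq 0$ for all $i$ and $C\cdot H\geq 1$. Writing $C=dH-\sum_i m_iE_i$, after applying transpositions we may assume $m_1\geq\cdots\geq m_r$, and while $d<m_1+m_2+m_3$ we apply $s_{H-E_1-E_2-E_3}$ and re-order the multiplicities. Each such step replaces $C$ by an irreducible curve of strictly smaller, still nonnegative, degree $2d-m_1-m_2-m_3$, so the process stops after finitely many steps at an irreducible curve $D$ in the $W(X_r^2)$-orbit of $C$, with $m_1\geq\cdots\geq m_r$, $d\geq m_1+m_2+m_3$, $D^2=-2$ and $D\cdot K_{X_r^2}=0$. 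If $m_r\geq 0$ then $D$ is a standard class, so Lemma~\ref{std} makes it a positive multiple of $H-E_1$ or of $-K_9$, both of self-intersection zero — contradicting $D^2=-2$. If $m_r<0$ then $D\cdot E_r=m_r<0$, which is impossible, since $D$ and $E_r$ are irreducible curves of different self-intersection, hence distinct. Therefore $X_r^2$ contains no $(-2)$-curve.

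For the $(-1)$-curves, let $C$ be one. If $C=E_i$ for some $i$ then $[C]$ already lies in $W(X_r^2)\cdot E_r$; otherwise $C\cdot E_i\geq 0$ for all $i$, $C\cdot H\geq 1$, and the same reduction yields an irreducible curve $D$ in the orbit of $C$ with $m_1\geq\cdots\geq m_r$, $d\geq m_1+m_2+m_3$, and $D^2=D\cdot K_{X_r^2}=-1$ (adjunction gives $C^2=C\cdot K_{X_r^2}=-1$, and these numbers are preserved). If $m_r\geq 0$ then $D$ is standard and Lemma~\ref{std} would force $D^2=0$, which is absurd; hence $m_r<0$, so $D\cdot E_r<0$, and since $D$ and $E_r$ are irreducible this is only possible if $D=E_r$. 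Thus every $(-1)$-curve class lies in $W(X_r^2)\cdot E_r$. Conversely, the set of classes of $(-1)$-curves is stable under each generating reflection: such a reflection maps the class of an irreducible curve to the class of an irreducible curve and preserves $C^2$ and $C\cdot K_{X_r^2}$, and an irreducible curve $\Gamma$ with $\Gamma^2=\Gamma\cdot K_{X_r^2}=-1$ has arithmetic genus $0$, hence is a $(-1)$-curve. So the set is stable under $W(X_r^2)$ and, containing $[E_r]$, equals $W(X_r^2)\cdot E_r$.

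The step that needs genuine justification is the assertion used throughout, that $s_{H-E_1-E_2-E_3}$ carries the class of an irreducible curve to the class of an irreducible curve. For the quadratic Cremona transformation this is classical: an irreducible plane curve is taken to an irreducible curve unless it is one of the three fundamental lines $H-E_i-E_j$, in which case it is taken to the exceptional curve $E_k$, still irreducible; moreover in the $(-2)$-case those fundamental lines never arise, having self-intersection $-1$. It is precisely this control on irreducibility that does the work normally done by passing from a pseudostandard to a standard class: since the reduced class $D$ remains the class of an irreducible curve, a negative final multiplicity forces $D=E_r$ outright, so that Lemma~\ref{std} only ever has to be applied to classes that are genuinely standard.
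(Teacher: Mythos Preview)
Your argument is correct and follows the same route as the paper: reduce the class of a $(-1)$- or $(-2)$-curve to pseudostandard form via the Weyl group, invoke Lemma~\ref{std} to exclude the standard case, and then use irreducibility to force $D=E_r$ (or derive a contradiction in the $(-2)$-case). The paper compresses all of this into a few lines, leaving implicit precisely the point you spell out---that the generating reflections of $W(X_r^2)$ are realized by geometric Cremona transformations and therefore send classes of irreducible curves to classes of irreducible curves---so your added justification of that step, together with the explicit termination argument and the converse inclusion $W(X_r^2)\cdot E_r\subseteq\{(-1)\text{-curves}\}$, simply fills in what the paper takes for granted.
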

\begin{proof}
Let $C$ be a $(-1)$ or $(-2)$-curve. 
Since the action of $W(X_r^2)$ preserves
the effective cone, we can assume the
class $[C] := dH-\sum_{i=1}^rm_iE_i$ to be 
effective and pseudostandard.
By Lemma~\ref{std} this class cannot be
standard. This immediately implies
$d=0$, so that by the irreducibility of $C$ one 
deduces $m_1=\cdots m_{r-1}=0$ and $m_r=-1$,
which proves the statement.
\end{proof}

\begin{lemma}
\label{-1lem}
If $C$ is an irreducible and reduced 
curve of $X_r^2$ with $C^2<0$ and 
$p_a(C)\leq 1$, then $C$ is a 
$(-1)$-curve.
\end{lemma}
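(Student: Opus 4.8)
The plan is to move the class of $C$ into a standard representative by the Weyl group, use Lemma~\ref{std} to reduce to a single configuration, and then settle that configuration by a dimension count which genuinely uses the very general position of the points. To begin with, if $\pi(C)$ is a point then $C=E_i$ for some $i$, a $(-1)$-curve; and if $\deg\pi(C)=1$ then, the points being general, $\pi(C)$ is a line through at most two of them, so $C^2<0$ forces $C=H-E_i-E_j$, again a $(-1)$-curve. Hence I may assume $\bar C:=\pi(C)$ is an irreducible plane curve of degree $d\ge 2$, and write $[C]=dH-\sum_i m_iE_i$ with $m_i=\operatorname{mult}_{p_i}\bar C\ge 0$. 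I then reduce $[C]$ to standard form by the action of $W(X_r^2)$, which preserves effectivity: after ordering the multiplicities one repeatedly applies, whenever $d<m_1+m_2+m_3$, the quadratic Cremona transformation based at $p_1,p_2,p_3$, passing each time to the strict transform of the image; by B\'ezout ($m_i+m_j\le d$ for an irreducible curve of degree $\ge 2$) this remains an irreducible curve with non-negative multiplicities and strictly smaller degree. The procedure terminates either at a $(-1)$-curve of degree $1$, in which case we are done, or at an irreducible curve whose class $D=dH-\sum_i m_iE_i$ is \emph{standard} with $d\ge 2$; since $D^2=C^2<0$, $D\cdot K_r=C\cdot K_r$ and $p_a(D)=p_a(C)\le 1$, it suffices to treat this case.

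Because $D^2<0$, the class $D$ is a positive multiple neither of $H-E_1$ nor of $-K_9$ (each of square $0$), so Lemma~\ref{std} forces $D\cdot K_r>0$, i.e. $\sum_i m_i\ge 3d+1$. Combined with $m_1+m_2+m_3\le d$ this gives $\sum_{i\ge 4}m_i\ge 2d+1$, and since $m_i\le m_3\le d/3$ for $i\ge 4$ it follows that $r\ge 10$. (The only other configuration in which Lemma~\ref{std} would be vacuous is $D^2=-2$, $D\cdot K_r=0$, that of a $(-2)$-curve, which is ruled out by Corollary~\ref{cor:neg}.) Everything is thus reduced to the claim that \emph{for $r$ very general points there is no irreducible plane curve $\bar C$ of degree $d\ge 2$ with $\operatorname{mult}_{p_i}\bar C=m_i$, $\sum_i m_i\ge 3d+1$, and whose strict transform on $X_r^2$ has arithmetic genus at most $1$.}

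This claim is the heart of the matter, and the step I expect to be the main obstacle; it is also where very general position is indispensable. Blowing up each $p_i$ once lowers the arithmetic genus of $\bar C$ by $\binom{m_i}{2}$, so the hypothesis $p_a\le 1$ reads $\binom{d-1}{2}-\sum_i\binom{m_i}{2}\le 1$; in particular $\bar C$ has geometric genus at most $1$, hence varies in a Severi variety $V_{d,g}$ of irreducible plane curves of degree $d$ and geometric genus $g\le 1$, of dimension $3d+g-1\le 3d$. Imposing an ordinary point of multiplicity $m_i$ at a fixed general point cuts $V_{d,g}$ in codimension $m_i$ --- a worse singularity at $p_i$ cutting it still further --- and for general points these conditions are independent by the regularity properties of Severi varieties (see the references in~\cite{chmr,cm}). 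Hence, were such a curve $\bar C$ to pass through general --- a fortiori very general --- points $p_1,\dots,p_r$, the associated incidence variety would dominate $(\mathbb P^2)^r$, forcing $\sum_i m_i\le\dim V_{d,g}\le 3d$ and contradicting $\sum_i m_i\ge 3d+1$. The delicate point, which I would either argue carefully or extract from the cited literature, is precisely this transversality statement for multiple-point conditions on Severi varieties at general points.
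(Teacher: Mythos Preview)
Your approach is genuinely different from the paper's and is essentially correct, though it rests on a nontrivial external input that the paper avoids entirely.

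The paper does not reduce to standard form or invoke Severi varieties. Instead it degenerates the $r$ very general points onto a smooth plane cubic, obtaining an anticanonical rational surface $Y$ with a smooth member of $|-K_Y|$, and lets $C$ specialise to an effective divisor $\Gamma$ on $Y$. For $p_a(C)=0$ it argues that every component of $\Gamma$ is rational, hence meets $-K_Y$ non-negatively, so $-K_r\cdot C\ge 0$; the genus formula then forces $C$ to be a $(-1)$- or $(-2)$-curve, and Corollary~\ref{cor:neg} eliminates the latter. For $p_a(C)=1$ it shows $C$ is smooth of genus $1$, builds an elliptic surface over the degeneration disc, and uses Kodaira's classification of singular fibres to conclude $\Gamma=nE$ with $E\sim -K_Y$, hence $C\sim -nK_r$, which is impossible. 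This is entirely self-contained within the paper plus standard surface theory.

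Your route---Cremona reduction to standard form, then Lemma~\ref{std} to force $D\cdot K_r>0$, then a dimension count on Severi varieties---is valid, but step~4 is the real content: you need that the locus of irreducible degree~$d$, geometric-genus~$g$ plane curves with multiplicity $\ge m_i$ at general $p_i$ has dimension at most $3d+g-1-\sum m_i$. This does hold for $g\le 1$ (for instance via unobstructedness of maps from smooth genus $\le 1$ curves to $\mathbb P^2$, since the Euler sequence shows $H^1(f^*T_{\mathbb P^2})=0$), but it is a genuine theorem, not a formality, and you rightly flag it. Two small points: your parenthetical about the $(-2)$-case is unnecessary, since Lemma~\ref{std} already excludes any standard class with $D^2<0$ and $D\cdot K\le 0$; and ``general, a fortiori very general'' is phrased backwards---what you mean is that existence for \emph{very general} points implies existence over a dense set, hence the incidence variety dominates $(\mathbb P^2)^r$.

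In short: both proofs work. The paper's degeneration argument is more self-contained; yours is more systematic and makes transparent exactly which numerical inequality fails, at the cost of importing the Severi-variety dimension bound.
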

\begin{proof}
Let $\mathcal X\to \Delta$
be a flat family whose general fiber
is of type $X_r^2$ and whose special
fiber $Y$ over $0\in\Delta$ is an 
anticanonical rational surface with
a smooth irreducible member in
$|-K_Y|$. The existence of such 
a degeneration is guaranteed by sending
the $r$ points on a smooth plane cubic.
The curve $C$ degenerates to an effective
divisor $\Gamma$ of $Y$. We consider two
possibilities.

If $p_a(C) = 0$
then any irreducible curve in the support 
of $\Gamma$ is smooth rational, so that 
it must have non-negative intersection 
product with $-K_Y$, because $|-K_Y|$
contains a smooth genus $1$ curve.
Thus we deduce $-K_r\cdot C = 
-K_Y\cdot \Gamma \geq 0$.
By the genus formula it follows that 
$C$ is either a $(-1)$-curve or 
a $(-2)$-curve. The second case is 
ruled out by Corollary~\ref{cor:neg}.

Assume now $p_a(C) = 1$. Reasoning 
as in the previous case one concludes
that $C$ must be smooth of genus $1$.
Restricting the degeneration 
$\pi\colon\mathcal X\to\Delta$
to the degeneration of $C$ to $\Gamma$
one obtains an elliptic surface 
$\mathcal S\to\Delta$, with 
special fiber $\Gamma$. Up to
shrinking $\Delta$ we can assume the
singularities of $\mathcal S$ to be
only in the central fiber. Let 
\[
 \xymatrix@C=15pt@R=15pt{
  \tilde{\mathcal S}\ar[rr]^-\eta\ar[rd]_-{\tilde\pi} && 
  \mathcal S\ar[ld]^-\pi\\
  & \Delta &
 }
\]
be a minimal resolution of singularities.
Denote by $\tilde\Gamma$ the 
special fiber of $\tilde\pi$.
Since $\tilde\Gamma$ contains an irreducible 
curve $E$ of genus $1$, by~\cite[\S V.7]{bhpv}
we deduce $\tilde\Gamma = nE$ 
for some $n>0$. In this case we have
$\tilde{\mathcal S} = \mathcal S$ since
all the exceptional divisors of 
$\eta$ should appear in the special fiber.
Thus $\Gamma = nE$.
On the other hand $-K_Y\cdot \Gamma 
= -K_r\cdot C = C^2 < 0$ implies 
$-K_Y\cdot E < 0$, so that $-K_Y\sim E$,
because $E$ is irreducible and $|-K_Y|$
contains an irreducible curve.
Thus $-nK_r\sim C$, a contradiction
since either $r\leq 9$ and $(-K_r)^2\geq 0$ or $r\geq 10$ and 
the class of $-K_r$ lies outside
the pseudoeffective cone.
\end{proof}

\begin{proof}[Proof of Theorem~\ref{asymp}]
We prove $(1)\Rightarrow (2)$.
Let $D$ be a nef and big divisor of 
$X_r^2$. By hypothesis the only negative
curves in $D^\perp$ are $(-1)$-curves.
By the Hodge index theorem and the fact that 
$D^2>0$, the intersection matrix of these
curves is negative definite. In particular
these curves are disjoint, since $(E+E')^2
\geq 0$ if $E,E'$ are $(-1)$-curves 
with $E\cdot E' > 0$.
Contracting these curves we get a birational
morphism $X_r\to X_s$, whith $s\leq r$, and 
$D$ is the pullback of an ample divisor of $X_s$,
so that it is not asymptotically special.

We prove $(2)\Rightarrow (1)$.
Assume that there is a negative curve
$C$ on $X_r^2$ which is not a $(-1)$-curve.
By Lemma~\ref{-1lem} we have $p_a(C)>1$.
Thus by Corollary~\ref{cor:spe} the
surface $X_r^2$ would contain an asymptotically 
special divisor, a contradiction.
\end{proof}

Before entering the proof of 
Theorem~\ref{nef} we recall that
the virtual dimension of a divisor 
$D = dH-\sum_{i=1}^nm_iE_i$ is 
\[
 v(D) 
 = \binom{d+2}{2} - \sum_{i=1}^r\binom{m_i+1}{2}-1
 = \frac{1}{2}(D^2-D\cdot K_r).
\]

\begin{proof}[Proof of Theorem~\ref{nef}]
The implication $(1)\Rightarrow (2)$ is 
obvious.

We prove $(2)\Rightarrow (1)$.
Since every nef class is non-special,
it is also asymptotically non-special,
so that, by Theorem~\ref{asymp}, 
the only negative curves of
$X_r^2$ are $(-1)$-curves.
Let $D$ be a divisor such that 
$D\cdot E\geq -1$ for any $(-1)$-curve
$E$. Then $D \sim M + F$, where $F$
is the sum of all $(-1)$-curves 
which have intersection product $-1$
with $D$. Observe that if $E$, $E'$
are two distinct $(-1)$-curves in $F$
then both curves are in the base locus
of $|D|$ so that $E\cdot E'=0$.
It follows that $F$ is a reduced divisor
(each curve in its support appears with
coefficient $1$) and $M\cdot F = 0$.
Thus
\begin{align*}
 v(D) 
 &= \frac{1}{2}(D^2-D\cdot K_r)\\[5pt]
 &= \frac{1}{2}((M+F)^2-(M+F)\cdot K_r)\\[5pt]
 &= v(M) + v(F) + M\cdot F\\[5pt]
 &= v(M).
\end{align*}
Observe that $M$ is
nef because the only negative curves 
of $X_r^2$ are $(-1)$-curves. Thus 
$M$ is non-special and we deduce 
$\dim |D| = \dim |M| = v(M) = v(D)$,
which shows that $D$ is non-special
as well.
\end{proof}

\begin{remark}
\label{remeff}
As a consequence of Lemma~\ref{std} one can show
that any divisor class of 
${\rm Pic}(X_{10}^2)_{\mathbb R}$
satisfying the equalities 
$D^2=D\cdot K_{10} = 0$ is nef
(see~\cite{df} and~\cite{cmr2}).
The argument goes like this.
First of all observe that for any
$r\geq 10$
a class $D\in{\rm Pic}(X_r^2)$ satisfying 
$D^2=D\cdot K_r = 0$ is effective. Indeed,
by Riemann-Roch either $D$ or $K-D$ is effective
but $(K_r-D)\cdot H<0$.
If $D$ is in pseudostandard 
form $D = D_0 + m_1E_1+\dots+m_sE_s$, 
with $D_0$ effective, $D_0\cdot E_i=0$ 
and $m_i>0$ for any $i$.
If we denote by $\pi\colon X_r^2\to X_{r-s}^2$
the contraction of $E_1,\dots,E_s$, then
$D_0 = \pi^*(B)$ for an effective 
divisor class $B\in{\rm Pic}(X_{r-s}^2)$. 
Thus
\[
 0 = D\cdot K_r 
 = \left(D_0+\sum_{i=1}^sm_iE_i\right)\cdot K_r
 = B\cdot K_{r-s}-\sum_{i=1}^sm_i
\]
implies that $B\cdot K_{r-s}=\sum_{i=1}^sm_i>0$.
Thus $-K_{r-s}$ is not nef, so that $r-s\geq 10$.
We deduce that if $r=10$ then $s$ must be zero
or in other words $D$ is a standard divisor class.
By Lemma~\ref{std} we conclude that $D$ is nef.
To conclude one observes that such classes define 
a quadric in ${\rm Pic}(X_{10}^2)_{\mathbb R}$
whose rational points are dense.
\end{remark}

\begin{example}
Let $B := dH-\sum_{i=1}^8m_iE_i$ be such that 
$b := \frac{1}{4}(2B^2-(B\cdot K_8)^2) > 0$ and 
let $a := -\frac{1}{2}B\cdot K_8$. 
Then the following class
\[
 D := B-(a+\sqrt b)E_9-(a-\sqrt b)E_{10}
\]
satisfies $D^2 = D\cdot K_{10} = 0$ so that it is nef.
\end{example}

\section{Higher dimension}
\label{sec:pf3}
In this section we consider the asymptotical 
behavior of semiample divisors in $X_r^n$
with the purpose of studying possible 
generalizations of the conjectures described 
in the Introduction.
A possible generalization of these
conjectures could be:
\begin{conjecture}
\label{con:n}
The blow-up $X^n_r$ of $\mathbb P^n$ at $r$ points
in very general position does not contain 
nef divisors which are asymptotically special. 
\end{conjecture}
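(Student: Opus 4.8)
The plan is to transplant the surface strategy of Section~\ref{sec:pf12} to higher dimension: for a nef divisor $D$ on $X:=X_r^n$, reduce the vanishing of $h^1(X,\mathcal O_X(nD))$ for $n\gg0$ to a statement on the proper closed subvariety where $D$ fails to be ample, and then control that subvariety. If $D$ is ample the conclusion is immediate from Serre vanishing \cite[Thm.~1.2.6]{laz}, so the content is concentrated in two boundary regimes: $D$ nef and big but not ample, and $D$ nef but not big.

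For $D$ nef and big but not ample I would invoke Nakamaye's theorem, which identifies the augmented base locus $\mathbf B_+(D)$ with the null locus $\mathrm{Null}(D)=\bigcup\{V: D^{\dim V}\cdot V=0\}$, a proper closed subset. The aim is to realise $D=\phi^*D'$ for a birational contraction $\phi\colon X\to X'$ collapsing exactly $\mathrm{Null}(D)$, with $D'$ ample and $X'$ having rational singularities; this is the direct analogue of the contraction step in the proof of Theorem~\ref{asymp}. Granting such a $\phi$, the projection formula together with $R^i\phi_*\mathcal O_X=0$ for $i>0$ gives $H^1(X,\mathcal O_X(nD))\simeq H^1(X',\mathcal O_{X'}(nD'))$, which vanishes for $n\gg0$ by Serre vanishing on $X'$. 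When instead $D$ is nef but not big its numerical dimension is $<n$; if $D$ is semiample it defines a fibration $\psi\colon X\to Y$ with $D=\psi^*A$ and $A$ ample, and, provided $R^q\psi_*\mathcal O_X=0$ for $q>0$ (which holds when the fibres are rational with rational singularities, as one expects for contractions of the rational variety $X$), the Leray spectral sequence degenerates to $h^1(X,\mathcal O_X(nD))=h^1(Y,\mathcal O_Y(nA))$, again $0$ for $n\gg0$. In both regimes one could alternatively try to extend Proposition~\ref{pa} and Lemma~\ref{equiv} by replacing $p_a(D^\perp)$ with a cohomological invariant attached to $\mathrm{Null}(D)$ that computes $h^1(E,\mathcal O_E(nD))$ on an effective divisor $E$ supported there, so as to bypass the global contraction.

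The decisive obstacle --- and the reason the statement is only a conjecture --- is that each of these routes presupposes that a nef divisor of $X$ is \emph{semiample}, equivalently that the required contraction or fibration exists. For $r<2^n$ this is available and yields Theorem~\ref{thm:Pn}; there the bound enters through the bigness of the space of quadrics through the points, reflected in $(2H-\sum_{i=1}^rE_i)^n=2^n-r$. For large $r$ the situation degenerates: $X_r^n$ is no longer a Mori dream space, its Cox ring need not be finitely generated, and one does not even know that every nef class is semiample, so the morphisms $\phi$ and $\psi$ cannot be guaranteed to exist.

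Independently, the naive unconditional tool also breaks down. Writing $nD=K_X+(nD-K_X)$ and applying Kawamata--Viehweg vanishing \cite{ko} would require $nD-K_X$ to be nef and big; but $-K_X=(n+1)H-(n-1)\sum_{i=1}^rE_i$ fails to be nef, and indeed fails to be pseudoeffective, once $r$ is large, so $nD-K_X$ need not be nef even though it is big for big $D$ and $n\gg0$. There is thus no universal nef and big correction class pushing $nD$ into the Kawamata--Viehweg range. I would therefore attack the conjecture in stages, first establishing it for the nef classes that are known to be semiample --- those lying in the cone spanned by $H$ and the classes contracted by known Mori fibrations of $X_r^n$ --- and only afterwards confronting the semiampleness of a general nef class, which is where the real difficulty lies.
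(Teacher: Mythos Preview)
The statement you are addressing is a \emph{conjecture}: the paper does not prove it and offers no argument to compare against. What the paper does prove is the special case $r<2^n$ (Theorem~\ref{thm:Pn}). You correctly recognise this, and your diagnosis of the obstacle --- that the general case would follow once one knows every nef class on $X_r^n$ is semiample, a fact that fails or is unknown for large $r$ --- is exactly right and matches the paper's implicit stance.

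For the proven range $r<2^n$ your sketch and the paper's argument share the same skeleton (semiample $\Rightarrow$ contraction $\Rightarrow$ Leray/projection formula with $R^1f_*\mathcal O_X=0$ $\Rightarrow$ Serre vanishing on the target; this is Proposition~\ref{pro:cri}), but the details differ. The paper does not use Nakamaye's theorem or an abstract null-locus contraction; instead it inputs the explicit description of the Mori cone from \cite{CLO}, so that the nef cone is polyhedral with generators $H$, $H-E_i$, $2H-\sum_{i\in I}E_i$, and then checks semiampleness and the vanishing of $R^1{f_D}_*\mathcal O$ by a degeneration sending the $r$ points onto a complete intersection of $n$ quadrics, where the relevant morphisms are locally toric with rational singular image. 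Your remark that ``the bound enters through $(2H-\sum E_i)^n=2^n-r$'' is suggestive but not the actual mechanism: the paper never computes this top self-intersection, and the bound $r<2^n$ enters via the cited Mori-cone result rather than a bigness calculation. So your outline is conceptually aligned with the paper but leaves unexplained precisely the step the paper supplies --- why, for $r<2^n$, every nef class is semiample with the required control on $R^1$.
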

We are going to prove 
that Conjecture~\ref{con:n}
holds if $r<2^n$ (Theorem~\ref{thm:Pn}),
but first we need some preliminary results.


\begin{definition}
Given a semiample Cartier divisor $D$ on a 
normal variety $X$ let
\[
 m_D := \gcd(n\in\mathbb N\, :\, 
 nD\text{ is base point free}).
\]
\end{definition}
For any positive integer $m$ denote by 
$\phi_m\colon X\to Y_m$
the rational map defined by the linear system 
$|mD|$. By~\cite[Thm. 2.1.27]{laz} there is 
a morphism with connected fibers 
\[
 f_D\colon X\to Y
\]
onto a normal variety $Y$, such that 
$\phi_m = f_D$ and $Y_m = Y$ for any 
sufficiently big multiple $m$ of $m_D$.
Moreover $m_DD = f_D^*A$ for some 
ample Cartier divisor $A$ on $Y$.
\begin{remark}
The semiample and big divisor $D$ of 
Example~\ref{mix} has $m_D = 2$. Indeed
any odd multiple of $D$ restricts to a 
non-trivial degree zero class on the 
elliptic curve $C\in |-K_X|$, while 
the even multiples restrict trivially to $C$.
The latter fact implies that $C$ is not
in the base locus of $2D$ and by
\cite[Thm. III.1]{har} we conclude that
$2D$ is base point free.    
\end{remark}

\begin{proposition}
\label{pro:cri}
Let $X$ be a normal projective variety
and let $D$ be a semiample big Cartier 
divisor of $X$ with $m_D=1$.
If $R^1{f_D}_*\mathcal O_X$ is trivial
then $D$ is asymptotically non-special
and otherwise it is asymptotically special. 
\end{proposition}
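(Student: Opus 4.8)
The plan is to compute $H^1(X,\mathcal O_X(nD))$ for $n\gg 0$ by pushing everything down to $Y$ via the Leray spectral sequence of $f_D$, using that the hypothesis $m_D=1$ makes $D$ itself a pullback from $Y$.

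First I would record the consequences of $m_D=1$: by the discussion preceding the statement we have $D=f_D^*A$ for an ample Cartier divisor $A$ on the normal variety $Y$, hence $nD=f_D^*(nA)$ for every $n$. Since $A$ is Cartier, $\mathcal O_Y(nA)$ is invertible and the projection formula gives $R^q{f_D}_*\mathcal O_X(nD)\cong\mathcal O_Y(nA)\otimes R^q{f_D}_*\mathcal O_X$ for all $q$. As $f_D$ is proper and $X$ projective, each $R^q{f_D}_*\mathcal O_X$ is coherent, so Serre vanishing on $Y$ applied to the ample class $A$ produces an integer $n_0$ with $H^p\bigl(Y,\mathcal O_Y(nA)\otimes R^q{f_D}_*\mathcal O_X\bigr)=0$ for all $p>0$, all $q$, and all $n\ge n_0$.

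Next I would run the Leray spectral sequence $E_2^{p,q}=H^p\bigl(Y,R^q{f_D}_*\mathcal O_X(nD)\bigr)\Rightarrow H^{p+q}\bigl(X,\mathcal O_X(nD)\bigr)$. For $n\ge n_0$ the only term with $p+q=1$ that need not vanish is $E_2^{0,1}=H^0\bigl(Y,\mathcal O_Y(nA)\otimes R^1{f_D}_*\mathcal O_X\bigr)$, and the differentials relevant to it land in or come from zero groups (one has $E_2^{1,0}=0$ and $E_2^{2,0}=0$ for $n\ge n_0$), so $H^1\bigl(X,\mathcal O_X(nD)\bigr)\cong H^0\bigl(Y,\mathcal O_Y(nA)\otimes R^1{f_D}_*\mathcal O_X\bigr)$ for all $n\ge n_0$. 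If $R^1{f_D}_*\mathcal O_X$ is trivial, i.e. the zero sheaf, this group vanishes for all $n\ge n_0$, so $D$ is asymptotically non-special. If instead $R^1{f_D}_*\mathcal O_X\neq 0$, then after enlarging $n_0$ the sheaf $\mathcal O_Y(nA)\otimes R^1{f_D}_*\mathcal O_X$ is globally generated; a nonzero globally generated coherent sheaf has a nonzero global section, so $H^1\bigl(X,\mathcal O_X(nD)\bigr)\neq 0$ for $n\gg 0$ and $D$ is asymptotically special.

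There is no serious obstacle in this argument: the only points requiring care are the inputs behind the two standard facts used — coherence of $R^1{f_D}_*\mathcal O_X$, which follows from properness of $f_D$, and the applicability of Serre vanishing, which needs the ampleness of $A$ on the (possibly singular) normal variety $Y$ — together with the elementary bookkeeping that shows no nonzero spectral-sequence differential can alter $E_\infty^{0,1}$ once $n\ge n_0$. I would also make explicit that \emph{trivial} is read here as ``the zero sheaf'', and note that the connectedness of the fibres of $f_D$ (so that $f_{D\,*}\mathcal O_X=\mathcal O_Y$) is not actually needed beyond the coherence already available.
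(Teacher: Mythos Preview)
Your argument is correct and follows essentially the same route as the paper's proof: Leray spectral sequence for $f_D$, projection formula to factor out $\mathcal O_Y(nA)$, and Serre vanishing/global generation on $Y$. The only cosmetic difference is that the paper invokes the five-term exact sequence and applies Serre vanishing just to $\mathcal O_Y(mA)$ (using $f_{D*}\mathcal O_X=\mathcal O_Y$), whereas you apply Serre vanishing to all the coherent sheaves $R^q f_{D*}\mathcal O_X$ at once, which is why you can legitimately dispense with the connectedness of the fibres.
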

\begin{proof}
Let $f := f_D$ and  
recall that $D = f^*A$, with $A$ ample 
on $Y$. By Serre's vanishing
\cite[Thm. 1.2.6]{laz} the higher
cohomology groups of $\mathcal O_Y(mA)$ 
vanish for all $m\gg 0$.
Since the Grothendieck-Leray spectral 
sequence
\[
 E_2^{p,q}(D) 
 := 
 H^p(Y,R^qf_*\mathcal O_X(D))
 \Rightarrow 
 H^{p+q}(X, \mathcal O_X(D))
\]
is a first 
quadrant one, by~\cite[Prop. 3.6.2]{Is} we 
have the following five terms exact sequence:
\[
 \xymatrix@R=2pt@C=10pt{
  0 \ar[r] & 
  E^{1,0}_2(mD) \ar[r] & 
  H^1(\mathcal O_X(mD)) \ar[r] &
  E^{0,1}_2(mD) \ar[r] & 
  E^{2,0}_2(mD) \ar[r] & 
  H^2(\mathcal O_X(mD)).
 }
\]
By the projection formula we have
$f_*\mathcal O_X(mD) = 
f_*\mathcal O_X\otimes \mathcal O_Y(mA) 
\simeq \mathcal O_Y(mA)$, so that the first 
and fourth cohomology groups 
$E^{i,0}_2(mD) = H^i(Y,f_*\mathcal O_X(mD))$
vanish for all $m\gg 0$. Thus the second
and third cohomology groups are isomorphic.
Again by the projection formula we have 
$R^1f_*\mathcal O_X(mD) \simeq 
R^1f_*\mathcal O_X\otimes \mathcal O_Y(mA)$
so that
\[
 H^1(X,\mathcal O_X(mD))
 \simeq
 H^0(Y,R^1f_*\mathcal O_X\otimes \mathcal O_Y(mA))
 \qquad
 \text{for all $m\gg 0$}.
\]
If $R^1f_*\mathcal O_X$ is trivial 
we conclude that $D$ is asymptotically 
non-special.
On the other hand if
$R^1f_*\mathcal O_X$ is non-trivial,
since it is a coherent sheaf and 
$A$ is ample, by Serre's
Theorem~\cite[Thm. 1.2.6]{laz}
the sheaf 
$R^1f_*\mathcal O_X\otimes \mathcal O_Y(mA)$
is generated by global sections for all 
$m\gg 0$. We conclude that $D$ is 
asymptotically special.
\end{proof}
Before proving Theorem~\ref{thm:Pn} we are 
now going to discuss an example 
of a nef and big divisor 
which is special but asymptotically 
non-special, on the blow up of 
$\mathbb P^4$ at $14$ points.

\begin{example}
\label{14pts}
Let $D$ be the divisor $2H-\sum_{i=1}^{14}E_i$ on $X^4_{14}$. 
The virtual dimension of $mD$, for 
$m = 1,2,3$ is $0,-1,-1$ respectively, so that
$2D$ and $3D$ are special. Moreover
a computer calculation shows that 
$4D$ is non-special of dimension $4$ and
its base locus is zero-dimensional. Thus 
$D$ is semiample by~\cite[Rem. 2.1.32]{laz} and it is big because $D^4 = 2$.
We now show that $mD$ is non-special 
for $m\geq 5$ using a degeneration 
argument.
Let $\mathcal X\to\mathbb A^1$ be a 
flat family whose general fiber is 
$X^4_{14}$ and whose special fiber $Y$
is the blowing-up of $\mathbb P^4$
at $14$ general points on the complete
intersection of three general quadrics.
Denote by $Y_1\subseteq Y_2\subseteq Y_3\subseteq Y_4 = Y$ the strict transforms
of the complete intersections of three,
two and one quadrics respectively, 
plus the central fiber
of the degeneration. By abuse of notation
we denote by $D$ the specialized divisor on $Y$. 
Observe that for any 
$2\leq i\leq 4$,
$Y_{i-1}\in |D|_{Y_i}|$,
so that we have an exact sequence
\[
 \xymatrix{
  0\ar[r] &
  \mathcal O_{Y_i}((m-1)D)\ar[r] &
  \mathcal O_{Y_i}(mD)\ar[r] &
  \mathcal O_{Y_{i-1}}(mD)\ar[r] &
  0.
 }
\]
Since $Y_1$ is a smooth curve of genus $5$ 
and $\deg(D|_{Y_1}) = D\cdot Y_1 = 2$ it follows that 
$mD|_{Y_1}$ is non-special for $m\geq 5$.
From the above exact sequence we deduce 
that $h^1(\mathcal O_{Y_2}(mD))
\leq h^1(\mathcal O_{Y_2}((m-1)D))
\cdots\leq h^1(\mathcal O_{Y_2}(5D))$.
A computer calculation shows that
$5D|_{Y_i}$ is non-special for $2\leq i
\leq 4$. 
A repeated use of the above
exact sequence shows 
that for any $m\geq 5$, 
$mD$ is non-special on 
$Y_4$ and, by 
semicontinuity of cohomology, it is 
non-special on $X_{14}^4$ 
too.
\end{example}

\begin{proof}[Proof of Theorem~\ref{thm:Pn}]
Let $H,E_1,\dots,E_r$ be a basis of 
${\rm Pic}(X_r^n)$ consisting of the
pullback of a hyperplane together with 
the exceptional divisors. If we denote by
$h,e_1,\dots,e_r$ the dual basis 
in ${\rm N}_1(X_r^n)$, we have that 
the Mori cone of $X_r^n$ is polyhedral,
generated by the classes $h-e_i-e_j$ and $e_k$, 
for $i,j,k\in \{1,\dots,r\}$ and $i < j$.
(see~\cite{CLO}).
The proof of this fact goes as follows:
the cone generated by these classes
is dual to the cone generated by the 
classes
\[
 H, 
 \quad
 H-E_i,
 \quad
 2H-\sum_{i\in I}E_i,
 \]
for any subset $I\subseteq\{1,\dots,r\}$.
These classes are all semiample (and thus
nef) on the variety $Y_r^n$ obtained by 
degenerating the $r$ points to a complete
intersection of $n$ general smooth quadrics.
Then one concludes by observing that 
the nef cone can only become smaller
by degeneration.

By semicontinuity of cohomology it is now enough
to show that any nef class of $Y_r^n$ is
asymptotically non-special.
Observe that any $H-E_i$ is asymptotically 
non-special
because it is pullback of a class of $X_1^n$
which is asymptotically non-special 
by Kawamata-Viehweg vanishing.
All the nef classes of $Y_r^n$,
which are not multiples of $H-E_i$,
are big. Thus by Proposition~\ref{pro:cri}
it suffices to show that for any nef and big 
divisor class $D$ of $Y_r^n$ the sheaf
$R^1{f_D}_*\mathcal O_{Y_r^n}$ is trivial.
This is consequence of the fact
that $f_D$ can contract only exceptional 
divisors and strict transforms of lines 
through two points. In the first case
one gets a smooth point while in the second
case the image of $f_D$ has a rational
singularity because the morphism is 
locally toric and one can 
apply~\cite[Thm. 11.4.2]{toric}.
\end{proof}

\bibliographystyle{plain}

\end{document}